\newcommand{\KinvG}{\mathbf{K}\mathsf{G}_{\mathsf{inv}}}
\newcommand{\sfive}{\mathbf{S5}}
\newcommand{\crispsfiveinvG}{\mathbf{S5}\mathsf{G}^\mathsf{c}_{\mathsf{inv}}}
\newcommand{\coimplies}{\Yleft}
\newcommand{\KG}{\mathbf{K}\mathsf{G}}
\newcommand{\crispKG}{\mathbf{K}\mathsf{G}^\mathsf{c}}
\newcommand{\pspace}{\mathsf{PSpace}}
\newcommand{\np}{\mathsf{NP}}
\newcommand{\conp}{\mathsf{coNP}}
\newcommand{\Prop}{\mathtt{Prop}}
\newcommand{\invol}{{\sim}}
\newcommand{\modalLinv}{\mathscr{L}^\invol_\Box}
\newcommand{\ClusterSet}{\mathfrak{Cl}}
\newcommand{\cluster}{\mathfrak{cl}}
\newcommand{\cl}{\mathsf{cl}}
\newcommand{\eF}{\mathsf{eF}}
\newcommand{\Amsf}{\mathsf{A}}
\newcommand{\Fmsf}{\mathsf{F}}
\newcommand{\Gmsf}{\mathsf{G}}
\newcommand{\Rmsf}{\mathsf{R}}
\newcommand{\Tmsf}{\mathsf{T}}
\newcommand{\cmsf}{\mathsf{c}}
\newcommand{\Amc}{\mathcal{A}}
\newcommand{\Bmc}{\mathcal{B}}
\newcommand{\Dmc}{\mathcal{D}}
\newcommand{\Hmc}{\mathcal{H}}
\newcommand{\Nmc}{{\mathcal{N}}}
\newcommand{\Omc}{{\mathcal{O}}}
\newcommand{\Pmc}{{\mathcal{P}}}
\newcommand{\Tmc}{{\mathcal{T}}}
\newcommand{\Zmc}{{\mathcal{Z}}}
\newcommand{\cmc}{\mathcal{c}}
\newcommand{\lmc}{\mathcal{l}}
\newcommand{\pmc}{\mathcal{p}}
\newcommand{\tmc}{\mathcal{t}}
\newcommand{\Ffrak}{\mathfrak{F}}
\newcommand{\Gfrak}{\mathfrak{G}}
\newcommand{\Mfrak}{{\mathfrak{M}}}
\newcommand{\Tfrak}{{\mathfrak{T}}}
\newcommand{\nmbf}{\mathbf{n}}
\newcommand{\tmbf}{\mathbf{t}}
\newcommand{\nmbb}{\mathbb{n}}
\newcommand{\Nmbb}{\mathbb{N}}
\newcommand{\TcrispsfiveinvG}{\mathcal{T}\!(\crispsfiveinvG)}
\newcommand{\real}{\mathsf{rl}}
\newcommand{\Var}{\mathsf{Var}}
\newcommand{\Str}{\mathsf{Str}}
\newcommand{\valueterm}{\mathsf{VT}}
\newcommand{\LF}{\mathsf{LF}}
\newcommand{\relterm}{\mathsf{RT}}
\newcommand{\zero}{\mathbb{0}}
\newcommand{\one}{\mathbb{1}}
\newcommand{\onetop}{\mathbf{1}}
\newcommand{\zerobot}{\mathbf{0}}
\newcommand{\WorldLabels}{\mathsf{WL}}
\newtheorem{convention}{Convention}
\begin{document}
\allowdisplaybreaks
% -- spacing in align environment
% between lines
\setlength{\jot}{0pt} 
% before and after
\setlength{\abovedisplayskip}{2pt}
\setlength{\belowdisplayskip}{2pt}
\setlength{\abovedisplayshortskip}{1pt}
\setlength{\belowdisplayshortskip}{1pt}
\setlength{\abovecaptionskip}{5pt plus 3pt minus 2pt}
\setlength{\belowcaptionskip}{5pt plus 3pt minus 2pt}
\title{Tableaux for epistemic G\"{o}del logic
\thanks{The research of Marta B\'ilkov\'a was supported by the grant 22-23022L CELIA of Grantov\'{a} Agentura \v{C}esk\'{e} Republiky.% The research of Thomas Ferguson was supported by the grant 22-01137S of the Czech Science Foundation.
}
}
%
%\titlerunning{Abbreviated paper title}
% If the paper title is too long for the running head, you can set
% an abbreviated paper title here
\author{Marta B\'ilkov\'a\inst{1}\orcidID{0000-0002-3490-2083} \and Thomas Ferguson\inst{1,2}\orcidID{0000-0002-6494-1833}\and Daniil Kozhemiachenko\inst{3}\orcidID{0000-0002-1533-8034}}
\authorrunning{B\'ilkov\'a et al.}
% First names are abbreviated in the running head.
% If there are more than two authors, 'et al.' is used.
\institute{The Czech Academy of Sciences, Institute of Computer Science, Prague\\
\email{bilkova@cs.cas.cz}
\and
Department of Cognitive Science, Rensselaer Polytechnic Institute, Troy, USA\\
\email{tferguson@gradcenter.cuny.edu}
\and
Aix-Marseille Univ, CNRS, LIS, Marseille, France\\
\email{daniil.kozhemiachenko@lis-lab.fr}}
\maketitle              % typeset the header of the contribution
\begin{abstract}
We propose a multi-agent epistemic logic capturing reasoning with degrees of plausibility that agents can assign to a~given statement with $1$ interpreted as ‘entirely plausible for the agent’ and $0$ as ‘completely implausible’ (i.e., the agent knows that the statement is false). We formalise such reasoning in an expansion of G\"{o}del fuzzy logic with an involutive negation and multiple $\sfive$-like modalities. As already G\"{o}del single-modal logics are known to lack the finite model property w.r.t.\ their standard $[0,1]$-valued Kripke semantics, we provide an alternative semantics that allows for  the finite model property. For this semantics, we construct a~strongly terminating tableaux calculus that allows us to produce finite counter-models of non-valid formulas. We then use the tableaux to show that the validity problem in our logic is $\pspace$-complete when there are two or more agents, and $\conp$-complete for the single-agent case.
% In this paper, we consider $\crispsfiveinvG$ --- an expansion of the G\"{o}del fuzzy logic with an involutive negation $\invol$ and $\sfive$-like modalities. We provide $\crispsfiveinvG$ with an alternative semantics that has the finite model property. For this semantics, we construct a~strongly terminating tableaux calculus that allows us to produce finite counter-models for non-valid formulas. We then use our tableaux to show that the validity of $\crispsfiveinvG$ is $\pspace$-complete when there are two or more agents and $\conp$-complete for the single-agent case.

\keywords{epistemic logic \and constraint tableaux \and G\"{o}del logic \and involutive negation \and modal logic.}
\end{abstract}
\section{Introduction\label{sec:introduction}}
Epistemic modal logic deals with reasoning about knowledge and thus usually interprets formulas of the form $\Box_a\phi$ as ‘$a$ knows that $\phi$ is true’. The dual formula $\lozenge_a\phi$ can be then interpreted as ‘$\phi$ is consistent with $a$'s knowledge’ or ‘$a$~considers $\phi$ plausible’. In the classical epistemic logic, such statements are either true or false; however, in many contexts, it makes sense to assume that a~proposition can be \emph{true to a~degree}. E.g., a~person can be ‘very tall’, ‘quite tall’, ‘not so tall’, etc. Similarly, an agent can consider a~given statement $\phi$ \emph{plausible to some degree} --- from \emph{entirely plausible} to \emph{completely implausible}. In the latter case, we can say that a~rational agent \emph{knows that $\phi$ is false}.

There are several approaches to representing degrees of belief and plausibility. The first one is to use \emph{graded modal logics} as has been done in, e.g.,~\cite{vanderHoekMeyer1992}. In this approach, the language is expanded with modal formulas $\lozenge^{\geq n}\phi$ read as ‘$\phi$ is true in at least~$n$ accessible states’. A~similar approach proposed in~\cite{LoriniSchwarzentruber2021} is to add formulas $\Box^n_a\phi$ interpreted as ‘$a$ believes in~$\phi$ after any removal of at most $n$ pieces of information from their belief base’. Another option~\cite{Gardenfors1975,vanderHoek1996} is to consider a~\emph{binary modality} $\succeq$ s.t.\ $\phi\succeq\chi$ is interpreted as ‘$\phi$ is at least as likely / plausible / probable / \ldots as $\chi$’. Note, however, that these logics usually expand \emph{classical} propositional logic. Thus, even though a~belief statement has degrees, it is still not trivial to formalise statements such as ‘the degree to which $a$~finds it plausible that Paula is \emph{very tall} should be lower or equal to the degree to which $a$~finds it possible that she is \emph{quite tall}’ because ‘very tall’ and ‘quite tall’ do not correspond to classical values of formulas.

Thus, in this paper, we will consider an alternative approach that relies on \emph{non-classical}, namely, \emph{fuzzy} logics. In fuzzy modal logics, both propositional and modal formulas can have values in the $[0,1]$ real-valued interval. Moreover, we will concentrate on a~less formal notion of ‘plausibility’ that one may use in everyday reasoning.

In the context of everyday reasoning, people usually do not assign specific values to statements about plausibility but can \emph{compare} them --- e.g., a~hailstorm can be less plausible than rain. To formalise such contexts, one can use \emph{G\"{o}del logic} and its modal expansions. An important property of G\"{o}del logic is that the truth of a~formula depends on the \emph{order} of the values, not on the values themselves. This is because (propositional) G\"{o}del logic is given via the t-norm~$\wedge_\Gmsf$ (G\"{o}del conjunction) and its residuum $\rightarrow_\Gmsf$ (implication):% and their duals $\vee_\Gmsf$ and $\coimplies_\Gmsf$ defined on $[0,1]$ by:%
\begin{align*}
a\wedge_\Gmsf b&=\min(a,b)&
a\rightarrow_\Gmsf b&=\begin{cases}1&\text{ if }a\leq b\\b&\text{ otherwise}\end{cases}%\nonumber\\
%a\vee_\Gmsf b&=\max(a,b)&a\coimplies_\Gmsf b&=\begin{cases}0&\text{ if }a\leq b\\a&\text{ otherwise}\end{cases}
\end{align*}
\textbf{G\"{o}del modal and description logics}
G\"{o}del modal logics are usually defined on Kripke frames of the form $\langle W,R\rangle$ where $W$ is a~set of states and $R$ is a~relation on $W$ (in which case, the frame is called \emph{crisp}) or a~function $W\times W\rightarrow[0,1]$ (in which case, the frame is \emph{fuzzy}).\footnote{Note, however, that some temporal G\"{o}del logics~\cite{AguileraDieguezFernandez-DuqueMcLean2022,AguileraDieguezFernandez-DuqueMcLean2022KR} are constructed over \emph{bi-relational} frames where one relation is used to define modalities and the other to compute the value of implication.} The values of modal formulas $\Box\phi$ and $\lozenge\phi$ in a~given state are defined via, respectively, the infimum and supremum of the values of $\phi$ in accessible states. Modal expansions of G\"{o}del logics are well-studied. In particular, $\Box$ and $\lozenge$ fragments of G\"{o}del modal logic $\KG$ were axiomatised in~\cite{CaicedoRodriguez2010} and $\KG$ with both $\Box$ and $\lozenge$ was axiomatised in~\cite{CaicedoRodriguez2015,RodriguezVidal2021}. Hypersequent calculi were constructed in~\cite{MetcalfeOlivetti2009,MetcalfeOlivetti2011} and used to obtain the $\pspace$-completeness of both fragments. Decidability and $\pspace$-completeness of the full logic were shown in~\cite{CaicedoMetcalfeRodriguezRogger2013,CaicedoMetcalfeRodriguezRogger2017}.

In \emph{knowledge representation and reasoning}, modal logics correspond to \emph{description logics} (DLs). G\"{o}del DLs were proposed in~\cite{BobilloDelgadoGomez-RamiroStraccia2009} to represent graded information in the ontologies and were further studied in~\cite{BobilloDelgadoGomez-RamiroStraccia2012}. It was shown that even the most expressive G\"{o}del DLs often have the same complexity as their classical counterparts~\cite{BorgwardtDistelPenaloza2014DL,BorgwardtDistelPenaloza2014KR,Borgwardt2014PhD,BorgwardtPenaloza2017}. Note, however, that, in contrast to modal logics, the interpretations in fuzzy description logics are often assumed to be \emph{witnessed}, that is, if the value of a~quantified concept assertion $[\forall\Rmsf.\Amsf](a)$ is $x$, then there must be some individual $b$ s.t.\ $\Amsf(b)$ has value~$x$.%

\noindent
\textbf{Epistemic and doxastic G\"{o}del logics}
In classical logic, it is customary (cf., e.g.,~\cite{HalpernMoses1992,FaginHalpernMosesVardi2003,vanDitmarschvanderHoekKooi2007,BlackburndeRijkeVenema2010}) to use $\sfive$ (the logic of frames $\langle W,R\rangle$ where $R$ is an equivalence relation) to formalise reasoning about knowledge and $\mathbf{K45}$ and $\mathbf{KD45}$ (logics of transitive Euclidean and serial transitive Euclidean frames, respectively) to reason about beliefs. Applications of G\"{o}del modal logics to the reasoning about beliefs and knowledge have also received much attention. In particular, G\"{o}del counterparts of logics $\sfive$~\cite{CaicedoMetcalfeRodriguezRogger2013,CaicedoRodriguez2015,Rogger2016phd,CaicedoMetcalfeRodriguezRogger2017,RodriguezVidal2021}, $\mathbf{K45}$, and $\mathbf{KD45}$~\cite{RodriguezTuytEstevaGodo2022} were axiomatised  and shown to be decidable. In fact, the single-agent $\sfive$ turns out to be $\np$-complete just as the classical version. Moreover, a~G\"{o}del analogue of the public announcement logic was proposed in~\cite{BenevidesMadeiraMartins2022}.

\noindent
\textbf{Contributions and plan of the paper}
Even though epistemic and doxastic G\"{o}del logics are well investigated, to the best of our knowledge, the decidability results concern \emph{single-agent} logics. This is why, in this paper, we will study $\crispsfiveinvG$~--- a~G\"{o}del counterpart to $\sfive_n$ (classical $\sfive$ with $n$ agents) over \emph{crisp} frames and expanded with an involutive negation $\invol$ defined as $v(\invol\phi,w)=1-v(\phi,w)$. This negation not only allows one to define additional connectives but also makes $\lozenge$ and $\Box$ interdefinable in the expected fashion.

Moreover, the involutive negation is closer to the intuitive reading of ‘not’ in contexts involving truth degrees than the standard G\"{o}delian negation $\neg_\Gmsf\phi\coloneq\phi\rightarrow_\Gmsf0$. Consider the following example, borrowed from~\cite{BilkovaFergusonKozhemiachenko2025TARK}. Let the truth degree of \emph{‘Paula is tall’} ($p$) be $\sfrac{1}{2}$. It is reasonable to assume that the truth degree of \emph{‘Paula is not tall’} is also $\sfrac{1}{2}$. Similarly, if she is shorter than average (i.e., the truth degree of $p$ is smaller than $\sfrac{1}{2}$), it is reasonable that the truth degree of ‘not-$p$’ should be greater than $\sfrac{1}{2}$. On the other hand, the truth degree of $\neg_\Gmsf p$ is $0$ when the truth degree of $p$ is positive. Thus, using $\neg_\Gmsf p$ to stand for ‘Paula is not tall’ is counterintuitive as it says that ‘Paula is tall is contradictory’.

As G\"{o}del counterpart of $\sfive$ lacks finite model property (FMP) w.r.t.\ standard semantics, we will adapt the approach of~\cite{CaicedoMetcalfeRodriguezRogger2013} and~\cite{BilkovaFergusonKozhemiachenko2025TARK} to produce a~semantics over finitary models (so-called \emph{$\Fmsf$-models}) w.r.t.\ which the logic will have the FMP. We will then use these semantics to construct a~tableaux calculus for $\crispsfiveinvG$ that allows for an explicit construction of countermodels from complete open branches. We will also use this calculus to prove $\pspace$-completeness of $\crispsfiveinvG$ and show that the alternative semantics is equivalent to the standard one.

The remainder of the text is structured as follows. We present and discuss two semantics of epistemic G\"{o}del logic in Section~\ref{sec:Fmodels}: first, the standard semantics, then the one based on $\Fmsf$-models. In Section~\ref{sec:tableaux}, we construct a~tableaux calculus and show its soundness and completeness w.r.t.\ $\Fmsf$-models. Section~\ref{sec:complexity} is dedicated to the complexity of the validity of $\crispsfiveinvG$ w.r.t.\ $\Fmsf$-models. In particular, we establish the finite model property and show that the validity w.r.t.\ $\Fmsf$-models in \emph{multi-agent} $\crispsfiveinvG$ is $\pspace$-complete and $\np$-complete for \emph{single-agent} $\crispsfiveinvG$. In Section~\ref{sec:equivalence}, we show that the new semantics is equivalent to the standard one. Finally, we summarise our results and set goals for future work in Section~\ref{sec:conclusion}. Omitted proofs can be found in the appendix.
\section{Two semantics of epistemic G\"{o}del logic\label{sec:Fmodels}}
Let us now present the language and semantics of $\crispsfiveinvG$. We fix a~countable set $\Prop$ of propositional variables. For a~finite set~$\Amsf$ of \emph{agents}, we define the language $\modalLinv(\Amsf)$ via the following grammar with $p\in\Prop$ and $a\in\Amsf$:
\begin{align*}
\phi&\Coloneqq p\mid\invol\phi\mid(\phi\wedge\phi)\mid(\phi\rightarrow\phi)\mid\Box_a\phi
\end{align*}
From now on, we will assume a~fixed $\Amsf$ and mostly omit explicit reference to it.

\begin{definition}[Epistemic frames]\label{def:epistemicframes}
An \emph{epistemic frame for~$\Amsf$} is a~tuple $\Ffrak\!=\!\langle W,\langle R_a\rangle_{a\in\Amsf}\rangle$ s.t.\ $W\!\neq\!\varnothing$ and $R_a$'s are equivalence relations on~$W$.
\end{definition}
\subsection{Standard semantics}
We begin with the standard semantics of $\crispsfiveinvG$ that generalises the Kripke semantics of $\sfive$~\cite{BlackburndeRijkeVenema2010}.
\begin{definition}[$\crispsfiveinvG$-models]\label{def:epistemicmodels}
An \emph{$\crispsfiveinvG$-model for~$\Amsf$} is a~tuple $\Mfrak=\langle\Ffrak,v\rangle$ s.t.\ $\Ffrak$ is an epistemic frame for~$\Amsf$ and $v:\Prop\times W\rightarrow[0,1]$ (\emph{valuation}).
\end{definition}
The valuation is extended to the complex formulas as follows:
\begin{align*}
v(\invol\phi,w)&=1-v(\phi,w)&v(\phi\wedge\chi,w)&=\min(v(\phi,w),v(\chi,w))\\
v(\phi\rightarrow\chi,w)=&\begin{cases}1\text{, if }v(\phi,w)\leq v(\chi,w)\\v(\chi,w)\text{, else}\end{cases}&
v(\Box_a\phi,w)&=\inf\{v(\phi,w')\mid wR_aw'\}
\end{align*}
We say that a~formula $\phi$ is \emph{$\crispsfiveinvG$-satisfiable} if there is a~$\crispsfiveinvG$-model $\Mfrak$ and $w\in\Mfrak$ s.t.\ $v(\phi,w)=1$; $\phi$ is \emph{$\crispsfiveinvG$-valid} if $v(\phi,w)=1$ for every $\Mfrak$ and $w\in\Mfrak$.
\begin{convention}\label{conv:connectives}
Given a formula $\phi$, we use $\lmc(\phi)$ to denote the number of occurrences of symbols in~$\phi$. We will write $\phi\leftrightarrow\chi$ as a shorthand for $(\phi\rightarrow\chi)\wedge(\chi\rightarrow\phi)$ and use the following defined connectives:\footnote{Here, $\coimplies$ is coimplication ($\phi\coimplies\chi$ is interpreted as ‘$\phi$ excludes $\chi$’; cf.~\cite{Rauszer1974,Gore2000} for a~detailed study and~\cite{Wansing2008} for the interpretation), and $\triangle$ is ‘Baaz Delta operator’ (cf.~\cite{Baaz1996} for a~discussion in the context of fuzzy logics).}
\begin{align*}
\onetop&\coloneqq p\rightarrow p&\zerobot&\coloneqq\invol\onetop&\neg\phi&\coloneqq\phi\rightarrow\zerobot\nonumber\\
\phi\vee\chi&\coloneqq\invol(\invol\phi\wedge\invol\chi)&\phi\coimplies\chi&\coloneqq\invol(\invol\chi\rightarrow\invol\phi)&\triangle\phi&\coloneqq\onetop\coimplies(\onetop\coimplies\phi)\\
\lozenge_a\phi&\coloneqq\invol\Box_a\invol\phi
\end{align*}

Furthermore, in $\Box_a\phi$ or $\lozenge_a\phi$, we call the lower index $a\in\Amsf$ the \emph{agent label}.
\end{convention}

Using Definition~\ref{def:epistemicmodels}, we obtain the following semantics of $\coimplies$, $\neg$, $\leftrightarrow$, $\triangle$, and~$\lozenge$:
\begin{align}
\label{equ:connectives}
v(\phi\!\coimplies\!\chi,w)&=
\begin{cases}
v(\phi,w)\text{, if }v(\phi,w)\!>\!v(\chi,w)\\
0\text{, else}
\end{cases}
&v(\neg\phi,w)&=\begin{cases}1\text{, if }v(\phi,w)\!=\!0\\0\text{, else}\end{cases}\nonumber\\
v(\phi\!\leftrightarrow\!\chi,w)&=
\begin{cases}
1\text{, if }v(\phi,w)\!=\!v(\chi,w)\\
\min(v(\phi,w),v(\chi,w))\text{, else}
\end{cases}&v(\triangle\phi,w)&=
\begin{cases}
1\text{, if }v(\phi,w)\!=\!1\\
0\text{, else}
\end{cases}\nonumber\\
v(\phi\!\vee\!\chi,w)&=\max(v(\phi,w),v(\chi,w))&v(\lozenge_a\phi,w)&=\sup\limits_{wR_aw'}\!\!v(\phi,w')
\end{align}
We also observe that non-validity and satisfiability are reducible to one another. Namely, $\phi$ is valid iff $\invol\triangle\phi$ is \emph{unsatisfiable}; $\phi$ is satisfiable iff $\invol\triangle\phi$ is \emph{not valid}.

In the introduction, we mentioned that it makes sense to consider degrees of plausibility in epistemic contexts. Furthermore, agents might know that a~proposition is true \emph{to some degree}. The example below illustrates one of such contexts and highlights the differences between G\"{o}del and classical epistemic logics.
\begin{example}\label{example:formalisation}
Consider the following formula: $\phi_\mathsf{plaus}\coloneqq(\Box_a(\invol p\rightarrow p)\wedge\lozenge_a p)\rightarrow\lozenge_bp$. Here, $\phi_\mathsf{plaus}$ says that if $a$~knows that the value of $p$ is at least $\frac{1}{2}$ and considers~$p$ plausible (does not know that it is false), then $b$ should also consider~$p$ plausible. Observe that $\phi_\mathsf{plaus}$ is \emph{valid} in $\sfive_2$ since $p$ and $\invol p\rightarrow p$ are classically equivalent and $\Box_ap$ implies $\lozenge_bp$. On the other hand, it can be easily refuted in $\crispsfiveinvG$, e.g., in the following model:
\begin{align*}
\Mfrak:\quad\xymatrix{w_1:p=\tfrac{1}{2}~\ar@{-}|{R_b}[rr]&&~w_0:p=\tfrac{1}{2}~\ar@{-}|{R_a}[rr]&&~w_2:p=1}
\end{align*}
Let us now present a~natural context illustrating $\Mfrak$. Say, there are three people: Alice, Brittney, and Paul. Alice \emph{knows} that Paul is \emph{quite} tall (i.e., that the value of $p$ is at least $\tfrac{1}{2}$) and finds it plausible that he is \emph{very} tall (value of $p$ equals~$1$ in some state accessible to Alice). Of course, it does not mean that Brittney also finds it plausible that Paul is \emph{very} tall. On the other hand, it is easy to see that the following formula \emph{is $\crispsfiveinvG$-valid}: $\phi'_\mathsf{plaus}\coloneqq(\Box_a(\invol p\rightarrow p)\wedge\lozenge_a p)\rightarrow(\invol\lozenge_b p\rightarrow\lozenge_b p)$. Here, if Alice knows that Paul is quite tall and finds it plausible that he is very tall, the degree of plausibility of $p$ for Brittney is at least $\frac{1}{2}$.
\end{example}

Finally, we observe that degrees of belief also depend on the degrees of truth. Let us consider such contexts in more detail.
\begin{example}\label{example:quitetall}
Consider two statements: \emph{the suitcase is very heavy} ($s$) and \emph{the suitcase is quite heavy} ($\invol s\rightarrow s$ --- i.e., the truth degree of $s$ is at least as high as that of its negation). It is reasonable to assume that the degree of plausibility of $s$ for Ann should not be higher (but can be lower) than the degree of plausibility of~$\invol s\rightarrow s$. Indeed, one can use Definition~\ref{def:epistemicmodels} and~\ref{conv:connectives} to see that $\lozenge_ap\rightarrow\lozenge_a(\invol p\rightarrow p)$ is $\crispsfiveinvG$-valid but $\lozenge_a(\invol p\rightarrow p)\rightarrow\lozenge_ap$ is not.

On the other hand, in classical logic, one cannot express this connection between different truth degrees of a~single statement and degrees of plausibility. Indeed, as $s$ and $\invol s\rightarrow s$ are classically equivalent, it follows that $\lozenge_ap\leftrightarrow\lozenge_a(\invol p\rightarrow p)$ is $\sfive$-valid.
\end{example}
\subsection{Epistemic $\Fmsf$-models}
As is well-known~\cite{CaicedoMetcalfeRodriguezRogger2013,Rogger2016phd,CaicedoMetcalfeRodriguezRogger2017}, even the single-agent version of the G\"{o}del modal logic over epistemic frames lacks the finite model property. So, will our semantics with Definition~\ref{def:epistemicmodels}. In~\cite{CaicedoMetcalfeRodriguezRogger2017}, there was given a~procedure to define alternative semantics for \emph{order-based} modal logics, i.e., logics whose connectives can be expressed in the lattice language. Clearly, $\invol$ is not order-based. Still, we can combine the approaches from~\cite{CaicedoMetcalfeRodriguezRogger2013} and~\cite{BilkovaFergusonKozhemiachenko2025TARK} to construct such semantics for $\crispsfiveinvG$.
\begin{definition}[$\eF$-models]\label{def:epistemicFmodels}
Let $\Pmc_{<\omega}([0,1])\!=\!\{X\mid X\!\subseteq\![0,1],|X|<\aleph_0\}$. An \emph{epistemic $\Fmsf$-model for~$\Amsf$} ($\eF$-model) is a~tuple $\Mfrak\!=\!\langle W,\!\langle R_a\rangle_{a\in\Amsf},\!\langle T_a\rangle_{a\in\Amsf},v\rangle$ s.t.\
\begin{itemize}[noitemsep,topsep=1pt]
\item $\langle W,\langle R_a\rangle_{a\in\Amsf}\rangle$ is an epistemic frame;
\item $T_a{:}W\!\rightarrow\!\Pmc_{<\omega}([0,1])$ are s.t.\ (i) $\{0,\frac{1}{2},1\}\subseteq T_a(w)$ for each $w\in W$, (ii) if $x\in T_a(w)$, then $1-x\in T_a(w)$, (iii) if $wR_aw'$, then $T_a(w)=T_a(w')$;
\item $v{:}\Prop\!\times\!W\!\rightarrow\![0,1]$.
\end{itemize}
We call $v$ an~\emph{$\Fmsf$-valuation} and extend it to propositional formulas as in Definition~\ref{def:epistemicmodels} and to modal formulas as follows:
\begin{align*}
v(\Box_a\phi,w)&=\max\{x\in T_a(w)\mid x\leq\inf\{v(\phi,w')\mid wR_aw'\}\}
\end{align*}
We say that a~formula $\phi$ is \emph{satisfiable on epistemic $\Fmsf$-models ($\eF$-satisfiable)} if there is an $\eF$-model $\Mfrak$ and $w\in\Mfrak$ s.t.\ $v(\phi,w)=1$; $\phi$ is \emph{valid on epistemic $\Fmsf$-models ($\eF$-valid)} if $v(\phi,w)=1$ for every $\Mfrak$ and $w\in\Mfrak$.
\end{definition}

Let us now discuss $\Fmsf$-models in more detail. As in~\cite{CaicedoMetcalfeRodriguezRogger2013,CaicedoMetcalfeRodriguezRogger2017}, $T_a$'s assign finite subsets of $[0,1]$ to each state in the model that represent sets of values that formulas $\Box_a\phi$ can have at $w$. In a~sense, the value of $\Box_a\phi$ in an $\Fmsf$-model can be interpreted as an approximation from below of its ‘real’ value in a~standard model. Conditions (i) and (ii) account for the presence of~$\invol$ and result in the following semantics of $\lozenge_a\phi$:
\begin{align*}
v(\invol\Box_a\invol\phi,w)=v(\lozenge_a\phi,w)&=\min\{x\in T_a(w)\mid x\geq\sup\{v(\phi,w')\mid wR_aw'\}\}
\end{align*}
This way, the value of $\lozenge_a\phi$ in an $\Fmsf$-model is an approximation from above of its ‘real’ value in a~standard model. Finally, condition~(iii) ensures the preservation of the following property of $\crispsfiveinvG$-models: if $wR_aw'$, then $v(\Box_a\phi,w)=v(\Box_a\phi,w')$ for every $\phi\in\modalLinv$.

We recall from the introduction that in G\"{o}del description logics, it is customary~\cite{BorgwardtDistelPenaloza2014KR,BorgwardtPenaloza2017} to assume that interpretations of ontologies are witnessed. In modal logic terms, this means that if $v(\Box_a\phi,w)=x$ or $v(\lozenge_a\phi,w)=x$, then there is some accessible $w'$ s.t.\ $v(\phi,w)=x$. Thus, a~natural question is whether epistemic $\Fmsf$-models are just witnessed standard models in disguise. The answer is known to be negative~\cite{CaicedoRodriguez2010,CaicedoMetcalfeRodriguezRogger2013}. For instance, $\phi=\triangle\lozenge_a p\rightarrow\lozenge_a\triangle p$ has value~$1$ in every witnessed model. On the contrary, consider the following $\eF$-model $\Mfrak=\langle W,R_a,T_a,v\rangle$: $W=\{w\}$, $R_a=\langle w,w\rangle$, $T_a(w)=\{0,\tfrac{1}{2},1\}$, $v(p,w)=\frac{3}{4}$. Now observe from Definition~\ref{def:epistemicFmodels} that $v(\lozenge p,w)=1$ but $v(\lozenge_a\triangle p,w)=0$. Thus, $v(\triangle\lozenge_a p\rightarrow\lozenge_a\triangle p,w)=0$.

Another question is whether rejecting witnessed models is reasonable in an epistemic context. In the following paragraph, we argue that it does make sense.

Intuitively, $\phi$ tells that if $a$~considers $p$~absolutely plausible, then $a$~should consider it possible that $p$~is true (has value~$1$). Observe, however, that one might not accept this principle if values of $p$ are considered \emph{large enough} to infer the (absolute) plausibility of~$p$. In the standard semantics, this can be modelled by having infinitely many accessible states where the supremum of the values of~$p$ is~$1$ even though $p$~never has value~$1$. In the $\eF$-models, given $T_a(w)=\{0,x_1,\ldots,x_n,1\}$, one can interpret the values of formulas in the interval $(x_n,1)$ as \emph{large enough}. Note furthermore, that as $\frac{1}{2}\in T_a(w)$, the set of negligibly small values does not trivialise to $\{0,1\}$. Dually, the values in the interval $(0,x_1)$ can be considered \emph{negligibly small}. This would mean that the following formula $\chi=\Box_a\neg\neg q\rightarrow\neg\neg\Box_aq$ can be invalidated in the $\eF$-model from the previous paragraph by setting $v(q,w)=\frac{1}{4}$. One can interpret $\chi$ as follows: if $a$~knows that the value of~$q$ is positive, then $a$~should not consider its negation plausible. Indeed, observe that $v(\neg\neg\Box_aq)=1$ iff $v(\lozenge_a\invol q)<1$. If, however, some value $x>0$ of~$q$ is \emph{negligibly small} for~$a$, they may not accept~$\chi$ if $v(q,w')\leq x$ in some state~$w'$ accessible to~$a$.
\section{Tableaux\label{sec:tableaux}}
Many reasoning techniques are used for G\"{o}del modal and description logics. In particular, there are terminating sequent calculi for $\Box$- and $\lozenge$-fragments of the G\"{o}del modal logic~\cite{MetcalfeOlivetti2009,MetcalfeOlivetti2011} (with the standard negation~$\neg$, not the involutive negation~$\sim$). There are also tableaux for G\"{o}del description logics with witnessed semantics, and it is known that they can be reduced to \emph{classical reasoning in ontologies} (cf.~\cite{BorgwardtPenaloza2017} for both approaches applied to expressive G\"{o}del DLs). In the case of \emph{general} (i.e., not necessarily witnessed) interpretations, automata-based decision procedures are used~\cite{BorgwardtDistelPenaloza2014DL}. In addition, there are semantic tableaux systems based on $\Fmsf$-models. In particular, tableaux for $\KG$ and $\crispKG$ (G\"{o}del modal logics \emph{without} involutive negation over arbitrary fuzzy and crisp frames, respectively) and for $\sfive\Gmsf^\cmsf$ (single-agent epistemic G\"{o}del logic without~$\sim$) are presented in~\cite{Rogger2016phd}. In~\cite{BilkovaFergusonKozhemiachenko2025TARK}, a~tableaux calculus for $\KinvG$ (a~G\"{o}del modal logic with involutive negation over \emph{arbitrary} fuzzy frames) is presented.

In this paper, we are constructing a~tableaux calculus for $\crispsfiveinvG$ based on $\eF$-models. As we have seen in the previous section, $\eF$-models \emph{do not} correspond to witnessed models. Thus, it is unreasonable to expect that there is a~nice reduction of $\crispsfiveinvG$-satisfiability to classical $\sfive$-satisfiability. Furthermore, in contrast to reasoning procedures based on sequent calculi or automata (or on reductions to classical logics), tableaux enable straightforward extraction of finite countermodels from failed proofs. Thus, the finite model property (w.r.t.\ $\eF$-models) will follow immediately from the completeness of tableaux.
% Let us now proceed to the presentation of tableaux for $\eF$-models. We will construct a~calculus that will allow us to read off finite countermodels from failed proofs straightforwardly.
We will generalise the tableaux for $\Gmsf\sfive^\cmsf$ from~\cite{Rogger2016phd}. One of the features of \emph{mono-modal} $\sfive$-like logics is that they can be characterised via frames with \emph{universal} relations. Thus, simple terminating tableaux for them can be defined without encoding accessibility relation (cf., e.g.,~\cite{Priest2008FromIftoIs} for the classical $\sfive$). Of course, when dealing with \emph{multi-modal}~$\sfive$, one has to keep track of different relations. This can be done, e.g., with relational terms of the form $wRw'$~\cite{HalpernMoses1992}. In this case, one sometimes has to propagate modal formulas into the generated states, which might complicate the decision procedure. Note, however, that since $R_a$'s in epistemic frames are equivalence relations, we can represent frames as families of clusters~--- equivalence classes under~$R_a$'s.% This removes the need to use ‘relational terms’.
% \begin{definition}[Clusters]\label{def:cluster}
% Let $\Ffrak=\langle W,\langle R_a\rangle_{a\in\Amsf}\rangle$ be an epistemic frame. We say that a~non-empty $\cluster\subseteq W$ is an \emph{$R_a$-cluster} if
% \begin{itemize}[noitemsep,topsep=1pt]
% \item $\cluster$ is closed under $R_a$ (i.e., if $w\in\cluster$ and $wR_aw'$, then $w'\in\cluster$);
% \item if $w\in\cluster$ and $w'\in\cluster$, then $wR_aw'$.
% \end{itemize}
% If a~cluster contains at least two states, we call it \emph{proper}.% Given $\Ffrak$, we define $\ClusterSet:\Amsf\times W\to2^W$ and write $\ClusterSet_a(w)$ to denote the $R_i$-cluster to which $w$~belongs. We use $\ClusterSet(\Mfrak)$ to denote the set of all proper clusters of $\Mfrak$.
% \end{definition}

We can now use terms of the form $w\!\in\!\cluster_a$ interpreted as ‘$w$ belongs to an~$R_a$-cluster $\cluster$’ in our tableaux. Furthermore, to account for the involutive negation, we will adapt ‘constraint tableaux’ from~\cite{Haehnle1999} and follow the presentation of the tableaux calculus from~\cite{BilkovaFergusonKozhemiachenko2025TARK}. We begin by giving formal definitions of the needed notions. We will then explain them in further detail.
\begin{definition}[Structures and constraints]\label{def:constraints}
We fix a~countable set $\WorldLabels=\{w,w',w_0,\ldots\}$ of \emph{state-labels}, $\ClusterSet=\bigcup_{a\in\Amsf}\{\cluster_a,\cluster'_a,\cluster^1_a,\ldots\}$ of \emph{cluster-labels}, and $\Var=\{d,e,d',\ldots\}$ of variables. We also define the set of $\Tmsf$-symbols $\Tmsf=\{t_i(\cluster_a)\!\mid\!\cluster_a\!\in\!\ClusterSet,i\in\Nmbb,a\in\Amsf\}\cup\{t^s_i(\cluster_a)\mid\cluster_a\in\ClusterSet,a\in\Amsf,i\in\Nmbb\}\cup\{\zero,\one\}$, and let $\triangledown\in\{\leqslant,<,\geqslant,>,=\}$. We define sets of \emph{labelled formulas} ($\LF$) and \emph{relational terms} ($\relterm$) as follows:
\begin{align*}
\LF&=\{w{:}\phi\mid w\in\WorldLabels,\phi\in\modalLinv\}&\relterm&=\{w\in\cluster_a\mid w\in\WorldLabels,\cluster_a\in\ClusterSet,a\in\Amsf\}%
\end{align*}

The set $\valueterm$ of \emph{value terms} is defined as follows:
\begin{align*}
\valueterm\ni\Tfrak&\coloneqq c\in\Var\mid\tmbf\in\Tmsf\mid0\mid1\mid1-\Tfrak
\end{align*}

Finally, \emph{constraints} have the form $\Sigma\triangledown\Tfrak$ s.t.\ $\Sigma\in\LF\cup\valueterm$, $\Tfrak\in\valueterm$. We will also set $\Str\coloneqq\LF\cup\valueterm$ and call its elements \emph{structures}.
\end{definition}

In the definition above, a~constraint of the form $w{:}\phi\leqslant\Tfrak$ means that the value of $\phi$ in~$w$ is less or equal to~$\Tfrak$. Note that $\Tfrak$ is never a~formula but a~‘value term’ --- either a~number, a~member of $\Tmsf$, or an expression of the form $1-\Tfrak$. Thus, we never compare the values of formulas to one another. We also need three types of $\Tmsf$-symbols --- constants ($\zero$ and $\one$), $t$'s, and $t^s$'s. Here, $t_i(\cluster_a)$ is a~member of $T_a(w)$ for any $w\in\cluster_a$, and $t^s_i(\cluster_a)$ is the immediate successor of $t_i(\cluster_a)$. This is needed to represent the semantics of $\Box$ (recall from Definition~\ref{def:epistemicFmodels} that the value of $\Box_a\phi$ at $w$ is the \emph{maximal} element of $T_a(w)$ that still does not exceed the infimum of the values of $\phi$).
\begin{definition}[$\TcrispsfiveinvG$ --- tableaux for $\crispsfiveinvG$]\label{def:TS5invG}
A \emph{tableau} is a downward-branching tree whose nodes are constraints and relational terms. Each branch can be extended by one of the rules from Figure~\ref{fig:rules}. %We apply the following conventions: 

Let $\Bmc=\{\cmc_1,\ldots,\cmc_n\}$ be a branch with constraints $\cmc_1$, \ldots, $\cmc_n$. Given a~constraint $\cmc$, let $\cmc^\tmc$ be the result of replacing $\Tfrak\in\valueterm$, and $\lambda\in\LF$ with variables $x_\Tfrak$, and $x_\lambda$, respectively, and $\nmbb\in\{\zero,\one\}$ with $n\in\{0,1\}$. Furthermore, for every $\cluster_a\in\ClusterSet$ and $a\in\Amsf$ occurring on $\Bmc$, we set
\begin{align*}
\Tmsf(\cluster_a)&=\{t(\cluster_a)\!\mid t(\cluster_a)\text{ is on }\Bmc\}\!\cup\!\{t^s(\cluster_a)\!\mid t^s(\cluster_a)\text{ is on }\Bmc\}\cup\\
&\quad~\{\nmbb\!\mid\exists\phi\;w{:}\phi\!=\!\nmbb\!\in\!\Bmc\text{ and }w\in\cluster_a\text{ occurs on }\Bmc\}
\end{align*}
and define $\Bmc$ to be \emph{closed} iff the following system of inequalities
\begin{align*}
\Bmc^\tmc&=\{\cmc^\tmc_1,\ldots,\cmc^\tmc_n\}\cup\{x_{t(\cluster_a)}<x_{t^s(\cluster_a)}\mid t(\cluster_a)\text{ and }t^s(\cluster_a)\text{ occur on }\Bmc\}
\end{align*}
\emph{does not have a~solution over $[0,\!1]$} s.t.\ for each $w\!\in\!\WorldLabels$ and $a\!\in\!\Amsf$, it holds that:
\begin{align}\label{equ:closure}
\Tmsf(\cluster_a)\!=\!\{t(\cluster_a),t^s(\cluster_a)\}\Rightarrow&\left[\begin{matrix}x_{t(\cluster_a)}=0~\&~x_{t^s(\cluster_a)}=\frac{1}{2}&\text{or }\\x_{t(\cluster_a)}=\frac{1}{2}~\&~x_{t^s(\cluster_a)}=1\end{matrix}\right]\nonumber\\[.4em]
|\Tmsf(\cluster_a)|\geq3\Rightarrow&
\left[\begin{matrix}
\forall\tmbf\!\in\!\Tmsf(\cluster_a)\;\exists\tmbf'\!\in\!\Tmsf(\cluster_a):x_\tmbf=1-x_{\tmbf'}\text{ and}\\[.4em]
\exists\tmbf_1,\tmbf_2,\tmbf_3:x_{\tmbf_1}=0~\&~x_{\tmbf_2}=\frac{1}{2}~\&~x_{\tmbf_3}=1
\end{matrix}\right]\\[.4em]
&\neg\exists t(\cluster_a),t^s(\cluster_a),t'(\cluster_a):x_{t(\cluster_a)}<x_{t'(\cluster_a)}<x_{t^s(\cluster_a)}\nonumber
\end{align}
A~branch is \emph{open} if it is not closed. A~branch $\Bmc$ is \emph{complete} when for every premise of any rule occurring on~$\Bmc$, its conclusion also occurs in $\Bmc$. The only exceptions are branches containing
\begin{itemize}[noitemsep,topsep=1pt]
\item $w{:}\phi<\Tfrak$ and $w{:}\phi<1$, or
\item $w{:}\phi>\Tfrak$ and $w{:}\phi>0$, or
\item $u:\Box_a\phi=\Tfrak$, $u':\Box_a\phi=\Tfrak'$, $u\in\cluster_a$, and $u'\in\cluster_a$.
\end{itemize}
In the first two cases, the rules are applied to the constraints with $\Tfrak$, not $0$ and~$1$. In the third case, we use $\equiv_\cluster$ and then apply $\Box_=$ to $u:\Box_a\phi=\Tfrak$ only.

Finally, $\phi\in\modalLinv$ \emph{has a~$\TcrispsfiveinvG$ proof} if there is a~tableau beginning with $\{w{:}\phi<1\}\cup\{w\in\cluster_a\mid a\in\Amsf\}$ s.t.\ all its branches are closed.
\end{definition}
\begin{figure}
\centering
\begin{align*}
\invol:\dfrac{w{:}\invol\phi\triangledown\Tfrak}{w{:}\phi\blacktriangledown1\!-\!\Tfrak}
&&
\wedge_\triangleright:\dfrac{w{:}\phi\wedge\chi\triangleright\Tfrak}{\begin{matrix}w{:}\phi\triangleright\Tfrak\\w{:}\chi\triangleright\Tfrak\end{matrix}}
&&
\wedge_\triangleleft:\dfrac{w{:}\phi\wedge\chi\triangleleft\Tfrak}{w{:}\phi\triangleleft\Tfrak\mid w{:}\chi\triangleleft\Tfrak}\\[.4em]
\rightarrow_\triangleright:\dfrac{w{:}\phi\rightarrow\chi\triangleright\Tfrak}{w{:}\chi\triangleright\Tfrak\left|\begin{matrix}\Tfrak\triangleleft1\\w{:}\chi\geqslant d\\w{:}\phi\leqslant d\end{matrix}\right.}
&&
\rightarrow_\leqslant:\dfrac{w{:}\phi\rightarrow\chi\leqslant\Tfrak}{1\leqslant\Tfrak\left|\begin{matrix}w{:}\phi>d\\w{:}\chi\leqslant d\\d\leqslant\Tfrak\end{matrix}\right.}
&&
\rightarrow_<:\dfrac{w{:}\phi\rightarrow\chi<\Tfrak}{\begin{matrix}w{:}\phi\geqslant d\\w{:}\chi<d\\d\leqslant\Tfrak\end{matrix}}
\end{align*}
\begin{align*}
\Box_\triangleright:\dfrac{\begin{matrix}w{:}\Box_a\phi\triangleright\Tfrak\\w\in\cluster_a\end{matrix}}{\begin{matrix}w{:}\Box_a\phi\!=\!\one\\1\triangleright\!\Tfrak\end{matrix}\left|\begin{matrix}w{:}\Box_a\phi\!=\!t(\cluster_a)\\\Tfrak\triangleleft t(\cluster_a)\\w'{:}\phi\!<\!t^s(\cluster_a)\\w'\in\cluster_a\\w'\in\cluster'_{a'}\end{matrix}\right.}
&&
\Box_\leqslant:\dfrac{\begin{matrix}w{:}\Box_a\phi\!\leqslant\!\Tfrak\\w\in\cluster_a\end{matrix}}{1\!\leqslant\!\Tfrak\left|\begin{matrix}t(\cluster_a)\!\leqslant\!\Tfrak\\w'{:}\phi\!<\!t^s(\cluster_a)\\w'\in\cluster_a\\w'\in\cluster'_{a'}\end{matrix}\right.}
&&
\Box_<:\dfrac{\begin{matrix}w{:}\Box_a\phi\!<\!\Tfrak\\w\in\cluster_a\end{matrix}}{\begin{matrix}t(\cluster_a)\!<\!\Tfrak\\w'{:}\phi\!<\!t^s(\cluster_a)\\w'\in\cluster_a\\w'\in\cluster'_{a'}\end{matrix}}\\[.4em]
\Box_=:\dfrac{\begin{matrix}w{:}\Box_a\phi\!=\!\Tfrak\\w\in\cluster_a~u\in\cluster_a\end{matrix}}{u{:}\phi\!\geqslant\!\Tfrak}
&&
\equiv_\cluster:\dfrac{\begin{matrix}u:\Box_a\phi=\Tfrak\\u':\Box_a\phi=\Tfrak'\\u\in\cluster_a~ u'\in\cluster_a\end{matrix}}{\begin{matrix}\Tfrak\leqslant\Tfrak'\\\Tfrak'\leqslant\Tfrak\end{matrix}}
\end{align*}
\caption{Tableaux rules. Vertical bars denote branching; $\blacktriangledown,\triangledown\!\in\!\{\leqslant,<,\geqslant,>\}$, if $\triangledown$ is $\leqslant$, then $\blacktriangledown$ is $\geqslant$ and vice versa (likewise for $<$); $\triangleright\in\{\geqslant,>\}$, $\triangleleft\in\{\leqslant,<\}$; $d$, $\cluster'_a$, $w'$, $t(\cluster_a)$, and $t^s(\cluster_a)$ are fresh on the branch; $a'\in\Amsf$ appears on the branch.}
\label{fig:rules}
\end{figure}

Let us briefly explain tableaux rules. First, we note that every generated state is added to a~cluster w.r.t.\ each relation. This is needed because accessibility relations in $\eF$-models are reflexive. Thus, when an application of a~$\Box_\leqslant$, $\Box_<$, or a~$\Box_\triangleright$ rule produces a~new state $w'$, we add it to the current cluster $\cluster_a$ and then also for every $a'\!\in\!\Amsf$ s.t.\ $a'\neq a$, create \emph{new} clusters $\cluster'_{a'}$ that will contain~$w'$. Moreover, $\Box_a\psi$ formulas are guaranteed to have the same values in all states of a~given $R_a$-cluster because of the $\equiv_\cluster$ rule.

Second, consider the $\Box_\leqslant$ rule. There we consider two possibilities: either $\Box_a\phi$ has value $1$ at~$w$ (left branch) or it has some value $t(\cluster_a)$ from $T_a(w)$ strictly lower than $1$ (right branch). In the first case, $\phi$ will have value $1$ in all states of the $R_a$-cluster to which $w$ belongs (cf.~$\Box_=$). In the second case, we create $w'$ where $\phi$ has value \emph{between $t(\cluster_a)$ and its successor $t^s(\cluster_a)$}. After the application of $\Box_\leqslant$, we can use $\Box_=$ that stipulates that $\phi$ has value at least $t(\cluster_a)$ in all states of the current $R_a$-cluster. Note, moreover, that $\Box_<$ rule is, essentially, a~particular case of the $\Box_\leqslant$ rule. We keep them separate to avoid unnecessary introductions of constraints $1<1$ to the branch.

Third, observe that the clusters allow us to avoid redundant applications of modal rules. E.g., if $\Bmc$~contains $u:\Box_a\phi\leqslant\Tfrak$, $u':\Box_a\phi\leqslant\Tfrak$, $u\!\in\!\cluster_a$, and $u'\!\in\!\cluster_a$, it is clear that we only need to apply the $\Box_\triangleleft$-rule once. This is because the newly generated state $w'$ will belong to the same $R_a$-cluster~$\cluster_a$ as both~$u$ and~$u'$. Similarly, if $\Bmc$ contains $u:\Box_a\phi\geqslant\Tfrak$, $u':\Box_a\phi\geqslant\Tfrak'$, $u\!\in\!\cluster_a$, and $u'\!\in\!\cluster_a$, we use the $\equiv_\cluster$ rule to avoid redundant applications of modal rules.

\begin{definition}[Model realising a~branch]\label{def:realisingmodel}
Let $\Mfrak=\langle W,\langle R_a\rangle_{a\in\Amsf},\langle T_a\rangle_{a\in\Amsf},v\rangle$ be an $\eF$-model and $\Bmc$ a~tableau branch. An~\emph{$\Mfrak$-realisation of~$\Bmc$} is a~map $\real:\WorldLabels\cup\Str\rightarrow W\cup[0,1]$ s.t.\ $\real(w)\in W$ and $\real(\Sigma)\in[0,1]$ for each $w\in\WorldLabels$ and $\Sigma\in\Str$ occurring on~$\Bmc$, and the following properties hold:
\begin{enumerate}[noitemsep,topsep=1pt]
\item $\real(\zero)=0$, $\real(\one)=1$;
\item $\real(w)R_a\real(w')$ if $w\in\cluster_a$ and $w'\in\cluster_a$ occur in~$\Bmc$ for some clus\-ter-label $\cluster_a$;
\item if $\real(\Tfrak)=x$, then $\real(1-\Tfrak)=1-x$ for every $\Tfrak\in\valueterm$;
\item $(\{\real(\tmbf)\mid\tmbf\in\Tmsf(\cluster_a)\}\cup\{0,\frac{1}{2},1\})\subseteq T_a(\real(w))$ and $\real(t(\cluster_a))<\real(t^s(\cluster_a))$ for all $w\in\WorldLabels$ and $a\in\Amsf$ s.t.\ $w\in\cluster_a$ occurs on~$\Bmc$;
\item there are no $t'(\cluster_a)$, $t(\cluster_a)$, $t^s(\cluster_a)$ on~$\Bmc$ s.t.\ \mbox{$\real(t(\cluster_a))\!\!<\!\real(t'(\cluster_a))\!\!<\!\real(t^s(\cluster_a))$;}
\item if $\Tmsf(\cluster_a)=\{t(\cluster_a),t^s(\cluster_a)\}$, then $\frac{1}{2}\in\{\real(t(\cluster_a)),\real(t^s(\cluster_a))\}$;
\item if $|\Tmsf(\cluster_a)|\geq3$, then for each $\tmbf\in\Tmsf(\cluster_a)$ on $\Bmc$, there is $\tmbf'\in\Tmsf(\cluster_a)$ on $\Bmc$ s.t.\ $\real(\tmbf)=1\!-\!\real(\tmbf')$, and there are $t_1,t_2,t_3\!\in\!\Tmsf(\cluster_a)$ s.t.\ $\real(t_1)\!=\!0$, $\real(t_2)\!=\!\frac{1}{2}$, and $\real(t_3)\!=\!1$.
\end{enumerate}
A~constraint $w{:}\phi\triangledown\Tfrak$ is \emph{realised by~$\Mfrak$ under $\real$} if $v(\phi,\real(w))\triangledown\real(\Tfrak)$. A~constraint $\Tfrak\triangledown\Tfrak'$ is realised by~$\Mfrak$ under $\real$ if $\real(\Tfrak)\triangledown\real(\Tfrak')$. A~branch~$\Bmc$ is realised by~$\Mfrak$ under $\real$ if \emph{$\real$ realises all constraints occurring on~$\Bmc$}.
\end{definition}

Let us now give an example of a~failed proof in~$\TcrispsfiveinvG$. We will see how to construct a~realising model from a~complete open branch.
\begin{example}\label{example:failedproof}
Recall the formula $\phi_\mathsf{plaus}$ from Example~\ref{example:formalisation} and replace $\lozenge$'s with $\Box$'s as follows: $(\Box_a(\invol p\!\rightarrow\!p)\!\wedge\!\invol\Box_a\invol p)\!\rightarrow\!\invol\Box_b\invol p$. A~failed proof is given in Fig.~\ref{fig:tableauxexamplepart1}.

First, consider the closed branches. All of them but the ones marked $\times_A$ and $\times_B$ contain constraints whose incompatibility is evident. In the leftmost branch: $w_0{:}p\geqslant\one$ and $w_0{:}p\leqslant1-\one$. In the second branch from the left: $w_0{:}p\leqslant1-\one$, $w_0{:}p\geqslant d_1$, and $w_0{:}p\geqslant1-d_1$. In the third branch: $w_0{:}p\geqslant t(\cluster^0_a)$, $t(\cluster^0_a)\geqslant d_0$, $w_0{:}p\leqslant1-\one$, and $w_0{:}\invol\Box_b\invol p<d_0$; etc. To see why $\times_A$ is closed, observe that $w_3{:}p<d_1$ and $w_3{:}p\leqslant1-d_1$ imply that $v(p,w_3)<\tfrac{1}{2}$. Now as $d_0>0$ (because of $w_0{:}\invol\Box_b\invol p<d_0$), we know that $t_1(\cluster^0_a)>0$ as well. Note that $\Tmsf(\cluster^0_a)=\{t_0(\cluster^0_a),t^s_0(\cluster^0_a),t_1(\cluster^0_a),t^s(\cluster^0_a)\}$. $\Tmsf(\cluster^0_a)$ should be closed under $1-x$ and contain $0$, $\frac{1}{2}$, and $1$. Thus, $\real(t_1(\cluster^0_a))=\tfrac{1}{2}$. But as we have $w_3{:}p\geqslant t_1(\cluster^0_a)$, there is a~contradiction. Similarly, for $\times_B$, $w_3{:}p\geqslant d_2$ and $w_3{:}p\geqslant1-d_2$ imply that $v(p,w_3)\geq\tfrac{1}{2}$ which again contradicts $w_3{:}p<d_1$ and $w_3{:}p\leqslant1-d_1$.

Let us build an $\eF$-model that realises $\frownie_A$. We have three states~--- $w_0$, $w_1$, and $w_2$ --- joined into the following clusters: $\cluster^0_a=\{w_0,w_2\}$, $\cluster^0_b=\{w_0,w_1\}$, $\cluster^1_a=\{w_1\}$, and $\cluster^1_b=\{w_2\}$. We also have $\Tmsf(\cluster^0_a)=\{t_0(\cluster^0_a),t^s_0(\cluster^0_a),\one\}$ and $\Tmsf(\cluster^0_b)=\{t(\cluster^0_b),t^s(\cluster^0_b)\}$ while $\Tmsf(\cluster^1_a)$ and $\Tmsf(\cluster^1_b)$ are not specified. For simplicity, we put $\real(w_i)\!=\!w_i$ with $i\!\in\!\{0,1,2\}$. It is clear that $\real(t_0(\cluster^0_a))\!=\!0$ and \mbox{$\real(t^s_0(\cluster^0_a))\!=\!\tfrac{1}{2}$}. As for the realisation of $\Tmsf(\cluster^0_b)$, we have $\real(t(\cluster^0_b))=\frac{1}{2}$ and $\real(t^s(\cluster^0_b))=1$ since $t(\cluster^0_b)\geqslant d_0$ and $w_0{:}\invol\Box_b\invol p<d_0$ occur on the branch. Finally, we have $v(p,w_2)=1$ because of $w_2{:}p\geqslant\one$; $v(p,w_0)\geq\tfrac{1}{2}$ because of $w_0{:}p\geqslant d_1$ and $w_0{:}p\geqslant1-d_1$; $v(p,w_1)\in(0,\tfrac{1}{2}]$. A~model can be seen in Fig.~\ref{fig:countermodelexample}.

A~straightforward check gives us that $v(\Box_a(\invol p\!\rightarrow\!p),w_0)\!=\!1$, \mbox{$v(\invol\Box_a\!\invol p,w)\!=\!1$} but $v(\invol\Box_b\invol p,w_0)=\frac{1}{2}$. Thus, $v(\phi_\mathsf{plaus},w_0)=\frac{1}{2}<1$.
\end{example}
\begin{figure}
\centering
\resizebox{1\linewidth}{!}{
\begin{forest}
smullyan tableaux
[w_0\!:\!(\Box_a(\invol p\!\rightarrow\!p)\!\wedge\!\invol\Box_a\invol p)\!\rightarrow\!\invol\Box_b\invol p\!<\!1\quad w_0\in\cluster^0_a\quad w_0\in\cluster^0_b[w_0\!:\!\Box_a(\invol p\!\rightarrow\!p)\!\wedge\!\invol\Box_a\invol p\!\geqslant\!d_0\quad d_0\!\leqslant\!1[w_0\!:\!\invol\Box_b\invol p\!<\!d_0[{w_0\!:\!\Box_b\invol p\!>\!1\!-\!d_0}[w_0\!:\!\Box_a(\invol p\!\rightarrow\!p)\!\geqslant\!d_0[w_0\!:\!\invol\Box_a\invol p\!\geqslant\!d_0[w_0\!:\!\Box_a\invol p\!\leqslant\!1\!-\!d_0
[{1\!-\!d_0\!<\!1}[{w_0\!:\!\Box_b\invol p\!=\!\one}[w_0\!:\!\invol p\!\geqslant\!\one[{{w_0\!:\!p\!\leqslant\!1\!-\!\one}}
[{w_0\!:\!\Box_a(\invol p\!\rightarrow\!p)\!=\!\one}[d_0\!\leqslant\!\one[w_0\!:\!\invol p\!\rightarrow\!p\!\geqslant\!\one
[{w_0\!:\!p\!\geqslant\!\one},closed][{w_0\!:\!p\!\geqslant\!d_1}[w_0\!:\!\invol p\!\leqslant\!d_1[d_1\!\leqslant\!\one[{w_0\!:\!p\!\geqslant\!1\!-\!d_1},closed]]]]
]]][{w_0\!:\!\Box_a(\invol p\!\rightarrow\!p)\!=\!t(\cluster^0_a)}[{t(\cluster^0_a)\!\geqslant\!d_0}[w_1\!:\!\invol p\!\rightarrow\!p\!<\!t^s(\cluster^0_a)[w_1\!\in\!\cluster^0_a\quad w_1\!\in\!\cluster^1_b[w_0\!:\!\invol p\!\rightarrow\!p\!\geqslant\!t(\cluster^0_a)[w_1\!:\!\invol p\!\rightarrow\!p\!\geqslant\!t(\cluster^0_a)
[{w_0\!:\!p\!\geqslant\!t(\cluster^0_a)},closed][t(\cluster^0_a)\!\leqslant\!1[{w_0\!:\!p\!\geqslant\!d_1}[w_0\!:\!\invol p\!\leqslant\!d_1[{w_0\!:\!p\!\geqslant\!1\!-\!d_1},closed]]]]
]]]]]]]]]]
[{w_0\!:\!\Box_b\invol p\!=\!t(\cluster^0_b)}[d_0\!\leqslant\!t(\cluster^0_b)[w_1\!:\!\invol p\!<\!t^s(\cluster^0_b)[w_1\!\in\!\cluster^0_b\quad w_1\!\in\!\cluster^1_a[w_0\!:\!\invol p\!\geqslant\!t(\cluster^0_b)[w_1\!:\!\invol p\!\geqslant\!t(\cluster^0_b)[w_0\!:\!p\!\leqslant\!1\!-\!t(\cluster^0_b)[w_1\!:\!p\!\leqslant\!1\!-\!t(\cluster^0_b)[w_1\!:\!p\!>\!1\!-\!t^s(\cluster^0_b)[{1\!-\!d_0\!\geqslant\!1},closed][t_0(\cluster^0_a)\!\leqslant\!1\!-\!d_0[w_2\!:\!\invol p\!<\!t^s_0(\cluster^0_a)[w_2\!\in\!\cluster^0_a~w_2\!\in\!\cluster^1_b[w_2\!:\!p\!>\!1\!-\!t^s_0(\cluster^0_a)[w_0\!:\!\invol p\!\geqslant\!t_0(\cluster^0_a)[w_2\!:\!\invol p\!\geqslant\!t_0(\cluster^0_a)[w_0\!:\!p\!\leqslant\!1\!-\!t_0(\cluster^0_a)[w_2\!:\!p\!\leqslant\!1\!-\!t_0(\cluster^0_a)
)[{w_0\!:\!\Box_a(\invol p\!\rightarrow\!p)\!=\!\one}[d_0\!\leqslant\!1[w_0\!:\!\invol p\!\rightarrow\!p\!\geqslant\!\one[w_2\!:\!\invol p\!\rightarrow\!p\!\geqslant\!\one
[w_0\!:\!p\!\geqslant\!\one,closed][d_1\!\leqslant\!1[w_0\!:\!\invol p\!\leqslant\!d_1[w_0\!:\!p\!\geqslant\!d_1[w_0\!:\!p\!\leqslant\!1\!-\!d_1
[w_2\!:\!p\!\geqslant\!\one[\frownie_A]][d_2\!\leqslant\!1[w_2\!:\!\invol p\!\leqslant\!d_2[w_2\!:\!p\!\geqslant\!d_2[w_2\!:\!p\!\leqslant\!1\!-\!d_2[\frownie_B]]]]]
]]]]]]]][{w_0\!:\!\Box_a(\invol p\!\rightarrow\!p)\!=\!t_1(\cluster^0_a)}[d_0\!\leqslant\!t_1(\cluster^0_a)[w_3\!:\!\invol p\!\rightarrow\!p\!<\!t^s_1(\cluster^0_a)[w_3\!\in\!\cluster^0_a~w_3\!\in\!\cluster^2_b[w_0\!:\!\invol p\!\rightarrow\!p\!\geqslant\!t_1(\cluster^0_a)[w_2\!:\!\invol p\!\rightarrow\!p\!\geqslant\!t_1(\cluster^0_a)[w_3\!:\!\invol p\!\rightarrow\!p\!\geqslant\!t_1(\cluster^0_a)
[w_3\!:\!\invol p\!\geqslant\!d_1~d_1\!\leqslant\!t^s_1(\cluster^0_a)[w_3\!:\!p\!<\!d_1[w_3\!:\!p\!\leqslant\!1\!-\!d_1[w_3\!:\!p\!\geqslant\!t_1(\cluster^0_a)[\times_A]][d_2\!\leqslant\!1[w_3\!:\!\invol p\!\leqslant\!d_2[w_3\!:\!p\!\geqslant\!d_2[w_3{:}p\!\geqslant\!1\!-\!d_2[\times_B]]]]]]]]]]]]]]]]
]]]]]]]]]]]]]]]]
]]]]]]]
\end{forest}}\\
\caption{A~failed proof of $(\Box_a(\invol p\!\rightarrow\!p)\!\wedge\!\invol\Box_a\invol p)\!\rightarrow\!\invol\Box_b\invol p$. $\frownie$'s denote complete open branches.}
\label{fig:tableauxexamplepart1}
\end{figure}
\begin{figure}
\centering
\begin{align*}
\xymatrix{w_1:p=\tfrac{1}{2}~\ar@{-}|{R_b}[rr]&&~w_0:p=\tfrac{1}{2}~\ar@{-}|{R_a}[rr]&&~w_2:p=1}\\[.3em]
\forall w:T_a(w)=T_b(w)=\{0,\tfrac{1}{2},1\}
\end{align*}
\caption{A~countermodel for $\frownie_A$ (cf.~Fig.~\ref{fig:tableauxexamplepart1}). Reflexive arrows are not shown.}
\label{fig:countermodelexample}
\end{figure}

We are now ready to obtain the completeness of tableaux w.r.t.\ $\eF$-models. The proof is standard and follows~\cite[Theorems~6.9 and~6.11]{Rogger2016phd} and~\cite[Theorem~2]{BilkovaFergusonKozhemiachenko2025TARK}.
\begin{restatable}{theorem}{TsfiveinvGcompleteness}\label{theorem:TS5invGcompleteness}
$\phi\in\modalLinv$ is $\eF$-valid iff it has a~$\TcrispsfiveinvG$ proof.
\end{restatable}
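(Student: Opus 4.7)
The plan is to prove the two directions separately by contraposition. For the \emph{soundness} direction, we show that if $\phi$ is refutable in some $\eF$-model $\Mfrak$ at some state $w_0$, then every tableau for $\phi$ has a~realised --- hence open --- branch. The initial branch $\{w{:}\phi<1\}\cup\{w\in\cluster_a\mid a\in\Amsf\}$ is realised under a~map $\real$ sending the state-label $w$ to $w_0$ and each cluster-label $\cluster_a$ to the $R_a$-class of~$w_0$. We then verify, by routine rule-by-rule inspection, that every tableau rule preserves realisability: if $\Bmc$ is realised by $\Mfrak$ under $\real$, then at least one of the child branches is realised by $\Mfrak$ under some extension of~$\real$ (the extension being non-trivial only for rules introducing fresh state-labels, variables, cluster-labels, or $\Tmsf$-symbols, whose values are read off from the witnessing data in~$\Mfrak$). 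Realised branches are open because a~realisation of~$\Bmc$ directly yields a~solution to $\Bmc^\tmc$ satisfying the side-conditions in~\eqref{equ:closure}.

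For the \emph{completeness} direction, assume $\phi$ has no $\TcrispsfiveinvG$ proof, so there exists a~complete open branch $\Bmc$; termination of the procedure producing such a~branch requires separate bookkeeping, much as in~\cite{Rogger2016phd,BilkovaFergusonKozhemiachenko2025TARK}, and is facilitated by the cluster-based formulation which ensures that the $\Box$-rules are applied only once per cluster per formula. Since $\Bmc$ is open, the system $\Bmc^\tmc$ has a~solution $\sigma$ over $[0,1]$ respecting~\eqref{equ:closure}. We build a~realising $\eF$-model $\Mfrak$ as illustrated in Example~\ref{example:failedproof}: $W$ is the set of state-labels occurring on~$\Bmc$; $R_a$ is read off from the relational terms $w\in\cluster_a$ (and reflexively closed thanks to the initial insertions); $T_a(\real(w))$ is the $\sigma$-image of $\Tmsf(\cluster_a)$ together with $\{0,\tfrac{1}{2},1\}$; and $v(p,\real(w))$ is defined from the atomic constraints on~$\Bmc$ (chosen arbitrarily for atoms not mentioned). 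A~standard induction on $\lmc(\psi)$ then shows that the valuation of~$\psi$ at $\real(w)$ in~$\Mfrak$ agrees with every constraint $w{:}\psi\triangledown\Tfrak$ appearing on~$\Bmc$; in particular, from $w{:}\phi<1$ we obtain $v(\phi,\real(w))<1$.

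\textbf{Main obstacle.} The delicate step is the inductive case for modal constraints $w{:}\Box_a\psi\triangledown\Tfrak$ in the completeness argument, where we must verify that the value of $\Box_a\psi$ at $\real(w)$ under Definition~\ref{def:epistemicFmodels} --- namely the greatest element of $T_a(\real(w))$ not exceeding $\inf\{v(\psi,\real(w'))\mid\real(w)R_a\real(w')\}$ --- lies in the correct $\triangledown$-relation with $\sigma(\Tfrak)$. This depends on several ingredients working in concert: the case-splitting $\Box$-rules exhaust the possible exact values of $\Box_a\psi$ within $\Tmsf(\cluster_a)$; the rule $\equiv_\cluster$ forces $\Box_a\psi$ to take a~common value throughout a~cluster, matching condition~(iii) of Definition~\ref{def:epistemicFmodels}; the closure conditions~\eqref{equ:closure} guarantee that $\sigma(\Tmsf(\cluster_a))$ genuinely extends to a~valid $T_a$-set (closed under $1-x$, containing $0,\tfrac{1}{2},1$, and with $t(\cluster_a),t^s(\cluster_a)$ consecutive); and the freshness of $w'$ in the $\Box_\triangleright$, $\Box_\leqslant$, and $\Box_<$ rules ensures that no extraneous constraints restrict $v(\psi,\real(w'))$ beyond those recorded on~$\Bmc$.
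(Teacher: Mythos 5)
Your proposal is correct and follows essentially the same route as the paper's own proof: soundness via rule-by-rule preservation of realisability together with the unrealisability of closed branches, and completeness by extracting a solution of $\Bmc^\tmc$ from a complete open branch, building the realising $\eF$-model from it (states from state-labels, $R_a$ from cluster terms, $T_a$ from the solved $\Tmsf$-symbols plus $\{0,\tfrac{1}{2},1\}$), and inducting on formulas. The delicate modal case you flag is exactly the case the paper works out in detail ($w{:}\Box_a\phi\leqslant\Tfrak$), relying on the same ingredients: the case-splitting $\Box$-rules, $\equiv_\cluster$, the closure conditions~\eqref{equ:closure}, and freshness of the generated state.
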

\section{Complexity\label{sec:complexity}}
In the previous section, we constructed a~tableaux calculus that allows us to check whether a~given formula is valid on $\eF$-models. Let us now produce a~polynomial space decision procedure on its basis. We adapt the proof of the $\pspace$-completeness of $\KinvG$ from~\cite{BilkovaFergusonKozhemiachenko2025TARK}.%
\begin{restatable}{theorem}{complexity}\label{theorem:complexity}~
\begin{enumerate}[noitemsep,topsep=1pt]
\item Let $|\Amsf|\geq2$. Then it is $\pspace$-complete to decide whether $\phi\in\modalLinv(\Amsf)$ is valid on $\eF$-models for~$\Amsf$.
\item Let $|\Amsf|=1$. Then it is $\conp$-complete to decide whether $\phi\in\modalLinv(\Amsf)$ is valid on $\eF$-models for~$\Amsf$.
\end{enumerate}
\end{restatable}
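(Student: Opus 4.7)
The plan is to obtain both upper bounds by resource-bounded strategies for searching open branches of $\TcrispsfiveinvG$ (whose soundness and completeness are given by Theorem~\ref{theorem:TS5invGcompleteness}), and to obtain the matching lower bounds by reducing from classical multi-agent $\sfive$ via the Baaz $\triangle$ operator.

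For the $\pspace$ upper bound when $|\Amsf|\geq2$, I would adapt the depth-first search strategy of the $\pspace$-completeness proof for $\KinvG$ in~\cite{BilkovaFergusonKozhemiachenko2025TARK}. The crucial structural fact is that each modal rule ($\Box_\triangleright$, $\Box_\leqslant$, $\Box_<$) replaces its premise $w{:}\Box_a\phi\triangledown\Tfrak$ with constraints whose formulas have strictly smaller modal depth than~$\phi$, while $\equiv_\cluster$ and $\Box_=$ only propagate $\Box_a\psi$-values inside the current $R_a$-cluster without creating fresh modal formulas of maximal depth. This suggests a recursive algorithm that, at each call, focuses on one cluster: (i) nondeterministically choose a disjunct for every branching rule; (ii) saturate propositional constraints in the current cluster using polynomially many fresh variables; (iii) for every $\Box_a\phi$-formula whose chosen disjunct introduces a witness $w'$, recurse into the new $R_{a'}$-clusters ($a'\neq a$) containing~$w'$; (iv) discard the local state after each recursive call returns. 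Recursion depth is bounded by the modal depth of the input, each call retains only polynomially many constraints, and the closure condition of Definition~\ref{def:TS5invG} reduces to a finite system of linear strict/non-strict inequalities over $[0,1]$, decidable in deterministic polynomial time. The whole procedure thus runs in nondeterministic polynomial space, hence in $\pspace$ by Savitch's theorem, and by closure under complement $\eF$-validity is likewise in $\pspace$.

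For $|\Amsf|=1$, I would argue that the recursion collapses: with a single agent~$a$, every state-label on a branch is placed into the same (unique) $R_a$-cluster by the modal rules, so by $\equiv_\cluster$ and $\Box_=$ each $\Box_a\psi$-subformula receives a single value shared by all worlds. Hence any non-valid input admits an $\eF$-countermodel whose only cluster carries $\Tmsf$-symbols of size linear in the number of $\Box_a$-subformulas, and polynomially many witness worlds suffice to realise all $\lozenge_a$-formulas. One can therefore guess a polynomial-size model together with rational values of polynomial bit-size for the propositional variables at each world and verify satisfaction in polynomial time, placing $\eF$-satisfiability in $\np$ and $\eF$-validity in $\conp$.

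Hardness for both parts is obtained uniformly: translating a classical $\sfive_n$-formula $\phi$ by replacing every propositional variable $p$ with $\triangle p$ forces crisp valuations on all relevant subformulas, and on such valuations the semantics of $\wedge$, $\rightarrow$, $\invol$, and $\Box_a$ coincide with the classical ones; thus $\phi\mapsto\phi^\triangle$ is a polynomial-time reduction of classical $\sfive_n$-validity to $\eF$-validity, transferring $\pspace$-hardness for $n\geq2$~\cite{HalpernMoses1992} and $\conp$-hardness for $n=1$. The step I expect to require the most care is the bookkeeping of the sets $\Tmsf(\cluster_a)$ inside the recursive procedure for part~1: the closure side-conditions~(\ref{equ:closure}) require $\Tmsf(\cluster_a)$ to be closed under $x\mapsto1-x$ and to contain $0,\tfrac{1}{2},1$, which forces the algorithm to reuse the constants $t_i(\cluster_a),t^s_i(\cluster_a)$ rather than regenerate them whenever the same cluster is revisited, and to prove that the number of such constants per cluster stays polynomial in the input so that each recursive call genuinely fits in polynomial space.
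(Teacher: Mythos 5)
Your proposal is correct and follows essentially the same route as the paper: the lower bounds via the $\triangle$-translation of classical $\sfive_n$, the $\pspace$ upper bound via a depth-first, cluster-by-cluster exploration of the terminating tableau with recursion depth bounded by the agent-label alternation (hence modal) depth and local state discarded on return, and the $\conp$ bound via the collapse to a single polynomial-size cluster yielding a small countermodel that can be checked in polynomial time. The only cosmetic differences are your explicit appeal to Savitch's theorem and your framing of branch closure as a linear-inequality feasibility check, both of which the paper handles implicitly.
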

\begin{proof}
% Lower bounds can be shown by polynomial-time reductions from classical~$\sfive_n$. Let us now tackle the $\pspace$-membership of the multi-agent logic. We show Item~1 only as Item~2 will be an immediate consequence of our decision procedure.

We show Item~1 only (cf.\ the appendix for Item~2). Consider first $\pspace$-hardness. We provide a~polynomial-time reduction from the validity in classical $\sfive$ which is known to be $\conp$-complete if $|\Amsf|=1$ and $\pspace$-complete if $|\Amsf|\geq2$~\cite{HalpernMoses1992}. Let $\phi\in\modalLinv$, and denote $\phi^\triangle$ the result of replacing every occurrence $p$ of every propositional variable $p$ with $\triangle p$ (recall Convention~\ref{conv:connectives} for the definition and semantics of~$\triangle$). It is clear from Convention~\ref{conv:connectives} that $\lmc(\phi^\triangle)=\Omc(\lmc(\phi))$.

Now let $\Ffrak=\langle W,\langle R_a\rangle_{a\in\Amsf}\rangle$ be an epistemic frame. Let further, $v$~be a~\emph{classical} valuation on~$\Ffrak$ and denote $\Mfrak=\langle W,\langle R_a\rangle_{a\in\Amsf},v\rangle$. We define an $\eF$-model $\Mfrak^\triangle=\langle W,\langle R_a\rangle_{a\in\Amsf},\langle T^\triangle_a\rangle_{a\in\Amsf},v^\triangle\rangle$ as follows: $T^\triangle_a(w)=\{0,\tfrac{1}{2},1\}$ for every $w\in W$ and $a\in\Amsf$; $v^\triangle(p,w)=v(p,w)$. We can now check by induction on $\phi$ that $v^\triangle(\phi^\triangle,w)=v(\phi,w)$ for every $\phi\in\modalLinv$ and $w\in W$.

The basis case of $\phi=p$ and $\phi^\triangle=\triangle p$ is evident from~\eqref{equ:connectives}. The cases of propositional connectives $\invol$, $\wedge$, and $\rightarrow$ follow by a~straightforward application of the induction hypothesis since propositional connectives behave classically on $\{0,1\}$, $\triangle$ behaves trivially on $\{0,1\}$, and $v^\triangle$ assigns only~$0$ and~$1$. Thus, it suffices to check that $v^\triangle(\phi^\triangle,w)=1$ iff $v(\phi,w)=1$. Consider now $\phi=\Box_a\chi$ and $\phi^\triangle=\Box_a\chi^\triangle$. We have
\begin{align*}
v(\Box_a\chi,w)=1&\text{ iff }\forall w':wR_aw'\Rightarrow v(\chi,w')=1\\
&\text{ iff }\forall w':wR_aw'\Rightarrow v^\triangle(\chi^\triangle,w')=1\tag{by IH}\\
&\text{ iff }v^\triangle(\Box_a\chi^\triangle)=1
\end{align*}
% \begin{align*}
% v(\Box_a\chi,w)=0&\text{ iff }\exists w':wR_aw'~\&~v(\chi,w')=0\\
% &\text{ iff }\exists w':wR_aw'~\&~v^\triangle(\chi^\triangle,w')=0\tag{by IH}\\
% &\text{ iff }v^\triangle(\Box_a\chi^\triangle)=0
% \end{align*}

For the converse direction, let $\Mfrak=\langle W,\langle R_a\rangle_{a\in\Amsf},\langle T_a\rangle_{a\in\Amsf},v\rangle$ be an $\eF$-model. We define a~classical $\sfive$-model $\Mfrak=\langle W,\langle R_a\rangle_{a\in\Amsf},v^\cl\rangle$ as follows: $v^\cl(p,w)=1$ if $v(p,w)=1$ and $v^\cl(p,w)=0$, otherwise. Again, by induction on $\phi$, we show that $v(\phi^\triangle,w)=v^\cl(\phi,w)$. First, observe that since every variable of $\phi^\triangle$ is in the scope of a~$\triangle$, $v(\phi^\triangle,w)\in\{0,1\}$ in every $w\in W$. Now, let $\phi=p$ and $\phi^\triangle=\triangle p$. If $v(\triangle p,w)=1$, then $v(p,w)=1$ whence, $v^\cl(p,w)=1$. If $v(\triangle p,w)=0$, then $v(p,w)<1$, whence, $v^\cl(p,w)=0$. The cases of propositional connectives can be dealt with by direct applications of the induction hypothesis. Finally, let $\phi=\Box_a\chi$ and $\phi^\triangle=\Box_a\chi^\triangle$. If $v(\Box_a\chi^\triangle,w)=1$, then $v(\chi^\triangle,w')=1$ in every $w'$ s.t.\ $wR_aw'$. Thus, $v^\cl(\chi,w')=1$ in all such $w'$'s by the induction hypothesis, whence $v^\cl(\Box_a\chi,w)=1$. If $v(\Box_a\chi^\triangle,w)=0$, then recall that $\chi^\triangle$ can have only values in $\{0,1\}$. Thus, there is some $w'$ s.t.\ $wR_aw'$ and $v(\chi^\triangle,w')=0$. Again, by the induction hypothesis, we obtain $v^\cl(\chi,w')=0$, whence, $v^\cl(\Box_a\chi,w)=0$, as required.

For the $\pspace$-membership, we begin by observing that \emph{every} tableau terminates. Indeed, all rules except for $\Box_\triangleright$ are finitely branching and decompose the formulas in the premise. If $\Box_\triangleright$ is applied, then a~constraint of the form $w{:}\Box_a\psi=\Tfrak$ is introduced to which we can apply $\Box_=$ which does decompose the formula. Moreover, as we have noticed in Section~\ref{sec:tableaux}, if a~branch contains several instances of a~modal constraint in different states of the same cluster, it suffices to apply a~modal rule to only one such instance.

For simplicity, we assume that $\Amsf=\{a,b\}$. Decision procedures for larger $\Amsf$'s can be obtained in a~similar manner. The algorithm runs as follows: given $\phi$, we start building a~tableau for $\Tmc(\phi)=\{w{:}\phi<1,w\in\cluster^0_a,w\in\cluster^0_b\}$. If a~rule introduces branching, we pick one branch of the tableau and work with it depth-first. If the branch we are working with is closed, we delete it and choose the next one. Our goal is to build a~model realising $\Tmc(\phi)$ ‘on the fly’ (cf.~Fig.~\ref{fig:algorithm} for an illustration). As we proceed depth-first on the tableau, we denote the branch we are working on with~$\Bmc$.

We begin with applying propositional rules until all labelled formulas in~$\Bmc$ have $\Box$'s as principal connectives. Once all propositional rules are applied, we pick $a\in\Amsf$ and apply modal rules for $\Box_a$-formulas. We begin with $\Box_\triangleright$, $\Box_\leqslant$, and $\Box_<$ rules. After all such rules are applied, we apply $\equiv_\cluster$ rules and propositional rules if needed. This adds $\Omc(\lmc(\phi))$ states to the $R_a$-cluster $\cluster^0_a$. Then we apply $\Box_=$ rules using the generated states. Note that $\Box_=$ uses the states generated by the applications of other modal rules. Moreover, as we apply all instances of $\Box_=$ in~$\cluster^0_a$ \emph{at once}, we have to first apply all rules $\Box_\triangleright$, $\Box_\leqslant$, $\Box_<$, and $\equiv_\cluster$ in~$\cluster^0_a$ (otherwise, some instances of $\Box_=$ might not be applicable). Note that $\Box_=$ is the only modal rule that can become applicable after the application of $\Box_\triangleright$. Observe also that we avoid generating new (proper) clusters until all modal rules are applied in $\cluster^0_a$.

Once done, we pick one state (say, $u$) in $\cluster^0_a$ s.t.\ $u:\Box_b\psi\triangledown\Tfrak$ is present on~$\Bmc$ and mark $u$ as ‘active’. We then apply modal rules for $\Box_b$ in the described manner but to $u$ only. This generates an $R_b$-cluster (say, $\cluster^1_b$) that also contains at most $\lmc(\phi)$ states. In this cluster, we pick a~state $u'$ s.t.\ $u':\Box_a\psi'\triangledown\Tfrak'\in\Bmc$, mark $u'$ ‘active’, and repeat the process until we obtain a~cluster that \emph{does not} contain labelled formulas of the form $\Box_a\chi\triangledown\Zmc$ (say, $\cluster'_b$ that was generated by a~state $s$ in an $R_a$-cluster $\cluster'_a$). We will generate at most $\lmc(\phi)$ clusters: there will be as many clusters as alternations of agent labels of nested modalities\footnote{E.g., it suffices to use one cluster for $\Box_a\Box_ap$, but we need \emph{two} clusters for $\Box_a\Box_bq$.} and each of these will have at most $\lmc(\phi)$ states. We check whether $\Bmc$ is closed. Note that a~model satisfying~$\Bmc$ can be guessed in non-deterministic polynomial time.

If $\Bmc$ is open, we mark every state $u''$ s.t.\ $u''\in\cluster'_b$ occurs on~$\Bmc$ as ‘safe’. Then we delete all labelled formulas of the form $u':\chi\triangledown\Zmc$ s.t.\ $u'\in\cluster'_b$ occurs on~$\Bmc$ from~$\Bmc$. Furthermore, we delete all non-active states in $\cluster'_b$ and remove the ‘active’ mark from~$s$. Then we pick another state in $\cluster'_a$ and repeat the process. The algorithm runs until either $w$ is marked as ‘safe’ (whence, $\phi$ is not valid) or all branches of the tableau are closed (in which case, $\phi$ is valid). Since, as we have remarked earlier, all tableaux terminate, we will stop at some point. Note furthermore, that at any given time the branch of the tableau contains at most $\lmc(\phi)$ proper clusters each with at most $\lmc(\phi)$ states (i.e., $\Omc(\lmc(\phi)^2)$ states in total). Each state contains subformulas of $\phi$ occurring in constraints, constraints consisting of two value terms, and relational terms indicating to which clusters it belongs. Thus, we need $\Omc(\lmc(\phi)^3)$ space to execute the algorithm.
\end{proof}
\begin{figure}[!ht]
\centering
\resizebox{.8\textwidth}{!}{%
\begin{circuitikz}
\tikzstyle{every node}=[font=\LARGE]
\draw [ color={rgb,255:red,0; green,255; blue,0} , fill={rgb,255:red,0; green,255; blue,0}] (18,-0.25) circle (0.25cm);
\draw [ color={rgb,255:red,255; green,0; blue,0} , line width=2pt , rotate around={122:(6.5,5.5)}] (6.5,5.5) ellipse (1.75cm and 0.75cm);
\draw [ color={rgb,255:red,0; green,0; blue,255} , line width=2pt , rotate around={128:(5,7.5)}] (5,7.5) ellipse (1.75cm and 0.75cm);
\draw [ color={rgb,255:red,0; green,0; blue,255} , line width=2pt , rotate around={128:(21.25,14)}] (21.25,14) ellipse (1.75cm and 0.75cm);
\draw [ color={rgb,255:red,255; green,0; blue,0} , line width=2pt ] (12.75,15.25) ellipse (2.25cm and 1cm);
\draw [ color={rgb,255:red,255; green,0; blue,255}, line width=2pt, ->, >=Stealth] (1.5,15.25) -- (3.25,15.25);
\node [font=\LARGE] at (2,15) {};
\node [font=\LARGE] at (2.25,15) {};
\draw [ fill={rgb,255:red,0; green,0; blue,0} ] (5,15.5) circle (0.25cm);
\draw [ fill={rgb,255:red,0; green,0; blue,0} ] (5,14.75) circle (0.25cm);
\draw [ fill={rgb,255:red,0; green,0; blue,0} ] (6.5,15.5) circle (0.25cm);
\draw [ fill={rgb,255:red,0; green,0; blue,0} ] (6.5,14.75) circle (0.25cm);
\draw [ color={rgb,255:red,255; green,0; blue,0} , line width=2pt ] (5.75,15.25) ellipse (2.25cm and 1cm);
\draw [ color={rgb,255:red,255; green,0; blue,255}, line width=2pt, ->, >=Stealth] (8.5,15.25) -- (10.25,15.25);
\draw [ fill={rgb,255:red,0; green,0; blue,0} ] (12,15.5) circle (0.25cm);
\draw [ fill={rgb,255:red,0; green,0; blue,0} ] (12,14.75) circle (0.25cm);
\draw [ fill={rgb,255:red,0; green,0; blue,0} ] (13.5,15.5) circle (0.25cm);
\draw [ line width=2pt ] (13.5,14.75) circle (0.25cm);
\draw [ color={rgb,255:red,0; green,0; blue,255} , line width=2pt , rotate around={128:(14.25,13.75)}] (14.25,13.75) ellipse (1.75cm and 0.75cm);
\draw [ fill={rgb,255:red,0; green,0; blue,0} ] (14.75,13) circle (0.25cm);
\draw [ fill={rgb,255:red,0; green,0; blue,0} ] (14.25,13.75) circle (0.25cm);
\node [font=\LARGE] at (2.5,14.5) {};
\draw [ color={rgb,255:red,255; green,0; blue,255}, line width=2pt, ->, >=Stealth] (15.75,15.5) -- (17.5,15.5);
\draw [ color={rgb,255:red,255; green,0; blue,0} , line width=2pt ] (20,15.5) ellipse (2.25cm and 1cm);
\draw [ fill={rgb,255:red,0; green,0; blue,0} ] (19,15.75) circle (0.25cm);
\draw [ fill={rgb,255:red,0; green,0; blue,0} ] (19,15) circle (0.25cm);
\draw [ fill={rgb,255:red,0; green,0; blue,0} , line width=0.2pt ] (20.5,15.75) circle (0.25cm);
\draw [ line width=2pt ] (20.5,15) circle (0.25cm);
\draw [ line width=2pt ] (21.75,13.25) circle (0.25cm);
\draw [ fill={rgb,255:red,0; green,0; blue,0} ] (21.25,14) circle (0.25cm);
\node [font=\LARGE] at (1.5,14.75) {};
\draw [ fill={rgb,255:red,0; green,0; blue,0} ] (6.5,5.5) circle (0.25cm);
\node [font=\LARGE] at (1.5,14.75) {};
\draw [ fill={rgb,255:red,0; green,0; blue,0} , line width=0.2pt ] (7,4.75) circle (0.25cm);
\draw [ color={rgb,255:red,255; green,0; blue,255}, line width=2pt, ->, >=Stealth, dashed] (19,11.75) -- (6,10);
\draw [ color={rgb,255:red,255; green,0; blue,0} , line width=2pt ] (3.5,8.75) ellipse (2.25cm and 1cm);
\draw [ fill={rgb,255:red,0; green,0; blue,0} ] (2.75,9.25) circle (0.25cm);
\draw [ fill={rgb,255:red,0; green,0; blue,0} ] (2.75,8.5) circle (0.25cm);
\draw [ fill={rgb,255:red,0; green,0; blue,0} ] (4.25,9.25) circle (0.25cm);
\draw [ line width=2pt ] (4.25,8.5) circle (0.25cm);
\draw [ line width=2pt ] (5.75,6.5) circle (0.25cm);
\draw [ fill={rgb,255:red,0; green,0; blue,0} ] (5.25,7.25) circle (0.25cm);
\draw [ color={rgb,255:red,255; green,0; blue,0} , line width=2pt , rotate around={122:(22.25,12.25)}] (22.25,12.25) ellipse (1.75cm and 0.75cm);
\node [font=\LARGE] at (1.5,14.75) {};
\draw [ color={rgb,255:red,0; green,255; blue,0} , fill={rgb,255:red,0; green,255; blue,0}] (8.75,-0.75) circle (0.25cm);
\draw [ color={rgb,255:red,0; green,255; blue,0} , line width=2pt ] (7.25,0) circle (0.25cm);
\draw [ color={rgb,255:red,0; green,255; blue,0} , fill={rgb,255:red,0; green,255; blue,0}] (8,-1.5) circle (0.25cm);
\draw [ color={rgb,255:red,0; green,0; blue,255} , line width=2pt , rotate around={34:(8,-1)}] (8,-1) ellipse (1.25cm and 2cm);
\draw [ color={rgb,255:red,255; green,0; blue,0} , line width=2pt , rotate around={-141:(6.5,0.75)}] (6.5,0.75) ellipse (0.75cm and 2cm);
\draw [ fill={rgb,255:red,0; green,0; blue,0} ] (6.25,1) circle (0.25cm);
\draw [ line width=2pt ] (5.75,1.75) circle (0.25cm);
\draw [ color={rgb,255:red,255; green,0; blue,255}, line width=2pt, ->, >=Stealth] (10.5,4.25) -- (12.25,4.25);
\draw [line width=2pt, dashed] (5.25,4.25) -- (5.25,2.75);
\draw [ color={rgb,255:red,255; green,0; blue,0} , line width=2pt ] (13.75,8.5) ellipse (2.25cm and 1cm);
\draw [ color={rgb,255:red,0; green,0; blue,255} , line width=2pt , rotate around={128:(15.25,7)}] (15.25,7) ellipse (1.75cm and 0.75cm);
\draw [ color={rgb,255:red,255; green,0; blue,0} , line width=2pt , rotate around={122:(16.5,5.25)}] (16.5,5.25) ellipse (1.75cm and 0.75cm);
\draw [line width=2pt, dashed] (16,4) -- (16,2.5);
\draw [ color={rgb,255:red,255; green,0; blue,0} , line width=2pt , rotate around={-141:(17.5,0.5)}] (17.5,0.5) ellipse (0.75cm and 2cm);
\node [font=\LARGE] at (1.5,14.75) {};
\draw [ fill={rgb,255:red,0; green,0; blue,0} ] (13,8.75) circle (0.25cm);
\draw [ fill={rgb,255:red,0; green,0; blue,0} ] (13,8) circle (0.25cm);
\draw [ fill={rgb,255:red,0; green,0; blue,0} ] (14.25,8.75) circle (0.25cm);
\draw [ line width=2pt ] (14.5,8) circle (0.25cm);
\draw [ fill={rgb,255:red,0; green,0; blue,0} ] (15.25,7) circle (0.25cm);
\draw [ line width=2pt ] (16,6.25) circle (0.25cm);
\draw [ fill={rgb,255:red,0; green,0; blue,0} , line width=0.2pt ] (16.75,5.25) circle (0.25cm);
\draw [ fill={rgb,255:red,0; green,0; blue,0} , line width=2pt ] (1,15.25) circle (0.25cm);
\draw [ fill={rgb,255:red,0; green,0; blue,0} ] (17,4.5) circle (0.25cm);
\draw [ fill={rgb,255:red,0; green,0; blue,0} ] (22.5,12) circle (0.25cm);
\draw [ fill={rgb,255:red,0; green,0; blue,0} ] (22.75,11.25) circle (0.25cm);
\draw [ line width=2pt ] (16.75,1.25) circle (0.25cm);
\draw [ line width=2pt ] (17.5,0.5) circle (0.25cm);
\draw [ color={rgb,255:red,0; green,0; blue,255} , line width=2pt , rotate around={-141:(16.5,-0.25)}] (16.5,-0.25) ellipse (1.75cm and 0.75cm);
\draw [ fill={rgb,255:red,0; green,0; blue,0} , line width=2pt ] (16.25,-0.25) circle (0.25cm);
\draw [ fill={rgb,255:red,0; green,0; blue,0} , line width=2pt ] (15.75,-0.75) circle (0.25cm);
\node [font=\LARGE] at (5.75,16.75) {$\cluster^0_a$};
\node [font=\LARGE] at (12.5,16.75) {$\cluster^0_a$};
\node [font=\LARGE] at (20,17) {$\cluster^0_a$};
\node [font=\LARGE] at (14.5,12) {$\cluster^1_b$};
\node [font=\LARGE] at (19.75,13.5) {$\cluster^1_b$};
\node [font=\LARGE] at (22,10.5) {$\cluster^2_a$};
\node [font=\LARGE] at (3.5,10.25) {$\cluster^0_a$};
\node [font=\LARGE] at (13.75,10) {$\cluster^0_a$};
\node [font=\LARGE] at (3.75,7) {$\cluster^1_b$};
\node [font=\LARGE] at (5.25,5) {$\cluster^2_a$};
\node [font=\LARGE] at (6.75,2.25) {$\cluster'_a$};
\node [font=\LARGE] at (8.5,-3.25) {$\cluster'_b$};
\node [font=\LARGE] at (13.75,6.75) {$\cluster^1_b$};
\node [font=\LARGE] at (15.25,5) {$\cluster^2_a$};
\node [font=\LARGE] at (17.75,2) {$\cluster'_a$};
\node [font=\LARGE] at (15.75,-2) {$\cluster''_b$};
\end{circuitikz}
}%
\caption{Building an $\eF$-model from a~tableau proof on the fly: ellipses denote proper clusters; active states are marked with $\bigcirc$'s; safe states are \textcolor{green}{green}.}
\label{fig:algorithm}
\end{figure}
\section{Equivalence with the standard semantics\label{sec:equivalence}}
In Sections~\ref{sec:tableaux} and~\ref{sec:complexity}, we provided a~sound and complete tableaux calculus for $\crispsfiveinvG$ and obtained the complexity of validity. We can also obtain the finite model property for the $\eF$-model based semantics.
\begin{corollary}\label{cor:FMP}
A~formula $\phi\in\modalLinv$ is valid on all $\eF$-models iff it is valid on all \emph{finite} $\eF$-models.
\end{corollary}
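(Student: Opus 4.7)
The plan is to derive the corollary directly from the soundness and completeness of $\TcrispsfiveinvG$ (Theorem~\ref{theorem:TS5invGcompleteness}) combined with the termination observation established in the proof of Theorem~\ref{theorem:complexity}. The right-to-left direction is trivial, since every finite $\eF$-model is in particular an $\eF$-model. The substantive direction is the contrapositive: I assume $\phi$ is not $\eF$-valid and construct a \emph{finite} refuting $\eF$-model.

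Concretely, suppose $\phi$ is not $\eF$-valid. By Theorem~\ref{theorem:TS5invGcompleteness}, $\phi$ has no $\TcrispsfiveinvG$ proof, so a~completed tableau starting from the initial set $\{w{:}\phi<1\}\cup\{w\in\cluster_a\mid a\in\Amsf\}$ contains at least one complete open branch $\Bmc$. The first key observation I would invoke is that $\Bmc$ is \emph{finite}: in the proof of Theorem~\ref{theorem:complexity} it is argued that every tableau terminates, since all rules except $\Box_\triangleright$ are finitely branching and strictly decompose the premise, while $\Box_\triangleright$ introduces an equality constraint $w{:}\Box_a\psi=\Tfrak$ which is in turn handled by $\Box_=$, and the cluster-based bookkeeping ensures that modal rules need only be applied once per cluster. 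Hence $\Bmc$ carries only finitely many state-labels, cluster-labels, and $\Tmsf$-symbols.

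From such a~complete open branch I~would then extract a~realising $\eF$-model exactly in the style of Example~\ref{example:failedproof} and Definition~\ref{def:realisingmodel}: the domain $W$ consists of one point for each state-label on $\Bmc$; the equivalence relations $R_a$ are obtained by declaring $\real(w)\,R_a\,\real(w')$ whenever $w,w'\in\cluster_a$ occur on $\Bmc$ (reflexivity, symmetry and transitivity being forced by the cluster-membership terms); the functions $T_a$ are built from the finitely many $\Tmsf$-symbols appearing on $\Bmc$ together with $\{0,\tfrac12,1\}$; and the values $\real(\tmbf)$, $\real(w{:}\psi)$ are obtained by picking any solution of the constraint system $\Bmc^\tmc$ over $[0,1]$, which exists precisely because $\Bmc$ is open. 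The propositional valuation $v$ then inherits the values assigned to constraints of the form $w{:}p\triangledown\Tfrak$, and by the usual induction on subformulas one verifies that $v(\psi,\real(u))=\real(u{:}\psi)$ for every labelled formula on $\Bmc$. Since $w{:}\phi<1$ is on $\Bmc$, we obtain $v(\phi,\real(w))<1$ in the \emph{finite} model thereby constructed.

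I~expect no deep obstacle; the subtle bookkeeping is in checking that clauses~(iv)--(vii) of Definition~\ref{def:realisingmodel} are all honoured by the chosen realisation $\real$. These clauses are, however, exactly the conditions~(\ref{equ:closure}) engineered into the definition of a~closed branch, so the openness of $\Bmc$ guarantees a~solution of $\Bmc^\tmc$ satisfying them. Since all the model-extraction machinery is already deployed in the proof of Theorem~\ref{theorem:TS5invGcompleteness}, Corollary~\ref{cor:FMP} is obtained by simply noting that the resulting model is finite whenever $\Bmc$ is, which is always the case here by termination.
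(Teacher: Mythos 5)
Your proposal is correct and follows essentially the same route as the paper: the right-to-left direction is trivial, and for the converse you invoke soundness and completeness of $\TcrispsfiveinvG$ together with termination to obtain a finite complete open branch, from which the (finite) realising $\eF$-model is extracted exactly as in the completeness proof of Theorem~\ref{theorem:TS5invGcompleteness}. The paper states this in one sentence; you have merely spelled out the same argument in more detail.
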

\begin{proof}
As $\TcrispsfiveinvG$ is sound and complete and each complete open branch in the failed proof of~$\phi$ contains a~finite $\eF$-countermodel of~$\phi$, the statement follows.
\end{proof}
% Moreover, by the immediate consequence of the proof of Theorem~\ref{theorem:complexity}, we obtain the finite model property of the $\eF$-model based semantics. 
It remains to show that the sets of formulas valid on $\crispsfiveinvG$-models and $\eF$-models coincide. To do this, we adapt the approaches of~\cite{CaicedoMetcalfeRodriguezRogger2013} and~\cite{BilkovaFergusonKozhemiachenko2025TARK}.

We begin with a~special version of the tree model property. Namely, we show that it suffices to check whether a~formula is satisfiable on $\eF$-models whose frame is a~tree of proper clusters (cf.~Fig.~\ref{fig:clustertree} for an illustration).
\begin{definition}\label{def:clustertreeframe}
Let $\Ffrak=\langle W,\langle R_a\rangle_{a\in\Amsf}\rangle$ be an epistemic frame for~$\Amsf$. Given two proper clusters $\cluster$ and $\cluster'$ in~$\Ffrak$, we write $\cluster\between\cluster'$ if $\cluster\neq\cluster'$ and $|\cluster\cap\cluster'|=1$.

We say that $\Ffrak$ is a~\emph{cluster tree} if it holds that:
\begin{itemize}[noitemsep,topsep=1pt]
\item any two distinct proper clusters have at most one state in common;
\item $\langle\ClusterSet(\Ffrak),\between\rangle$ is a~tree.
\end{itemize}

If we designate $\cluster$ as the \emph{root} of~$\Ffrak$, we say that the \emph{height of~$\Ffrak$} ($\Hmc(\Ffrak)$) is the number of proper clusters in the longest branch originating from $\cluster$.

An epistemic ($\Fmsf$-)model $\Mfrak$ whose underlying frame is a~cluster tree is called an \emph{epistemic cluster tree ($\Fmsf$-)model}. The height of $\Mfrak$ ($\Hmc(\Mfrak)$) is the height of its underlying frame.
\end{definition}
\begin{figure}
\centering
$\Ffrak$
\resizebox{.375\textwidth}{!}{%
\begin{circuitikz}
\tikzstyle{every node}=[font=\LARGE]
\draw [ color={rgb,255:red,255; green,0; blue,0} , line width=2pt ] (11.75,15) ellipse (1.75cm and 1cm);
\draw [ fill={rgb,255:red,0; green,0; blue,0} , line width=2pt ] (18.25,8.5) circle (0cm);
\draw [ fill={rgb,255:red,0; green,0; blue,0} , line width=2pt ] (11.75,15.25) circle (0.25cm);
\draw [ fill={rgb,255:red,0; green,0; blue,0} , line width=2pt ] (10.75,14.75) circle (0.25cm);
\draw [ fill={rgb,255:red,0; green,0; blue,0} , line width=2pt ] (9.5,14.5) circle (0.25cm);
\draw [ fill={rgb,255:red,0; green,0; blue,0} , line width=2pt ] (9,14) circle (0cm);
\draw [ color={rgb,255:red,255; green,0; blue,0} , line width=2pt , rotate around={-128:(6.75,12)}] (6.75,12) ellipse (2.25cm and 0.75cm);
\draw [ color={rgb,255:red,0; green,0; blue,255} , line width=2pt , rotate around={-153:(9,13.75)}] (9,13.75) ellipse (2.5cm and 1cm);
\draw [ fill={rgb,255:red,0; green,0; blue,0} , line width=2pt ] (7.75,13.25) circle (0.25cm);
\draw [ fill={rgb,255:red,0; green,0; blue,0} , line width=2pt ] (9.25,13.5) circle (0.25cm);
\draw [ color={rgb,255:red,255; green,0; blue,0} , line width=2pt , rotate around={-47:(10,12.75)}] (10,12.75) ellipse (1.5cm and 0.75cm);
\draw [ fill={rgb,255:red,0; green,0; blue,0} , line width=2pt ] (12.75,14.75) circle (0.25cm);
\draw [ fill={rgb,255:red,0; green,0; blue,0} , line width=2pt ] (10.5,12.25) circle (0.25cm);
\draw [ fill={rgb,255:red,0; green,0; blue,0} , line width=2pt ] (5.75,10.75) circle (0.25cm);
\draw [ fill={rgb,255:red,0; green,0; blue,0} , line width=2pt ] (6.5,11.75) circle (0.25cm);
\draw [ fill={rgb,255:red,0; green,0; blue,0} , line width=2pt ] (13.75,14) circle (0.25cm);
\draw [ fill={rgb,255:red,0; green,0; blue,0} , line width=2pt ] (14.5,13.25) circle (0.25cm);
\draw [ fill={rgb,255:red,0; green,0; blue,0} , line width=2pt ] (15.5,12.25) circle (0.25cm);
\draw [ fill={rgb,255:red,0; green,0; blue,0} , line width=2pt ] (16.5,11.5) circle (0.25cm);
\draw [ fill={rgb,255:red,0; green,0; blue,0} , line width=2pt ] (11.5,10.75) circle (0.25cm);
\draw [ color={rgb,255:red,0; green,0; blue,255} , line width=2pt , rotate around={-216:(13.75,14)}] (13.75,14) ellipse (2cm and 0.75cm);
\draw [ color={rgb,255:red,255; green,0; blue,0} , line width=2pt , rotate around={-41:(15.5,12.25)}] (15.5,12.25) ellipse (2cm and 0.75cm);
\draw [ color={rgb,255:red,0; green,0; blue,255} , line width=2pt , rotate around={-53:(11,11.5)}] (11,11.5) ellipse (1.5cm and 0.75cm);
\end{circuitikz}}
$\Gfrak$
\resizebox{.375\linewidth}{!}{
\begin{circuitikz}
\tikzstyle{every node}=[font=\LARGE]
\draw [ color={rgb,255:red,255; green,0; blue,0} , line width=2pt ] (11.75,15) ellipse (1.75cm and 1cm);
\draw [ fill={rgb,255:red,0; green,0; blue,0} , line width=2pt ] (18.25,8.5) circle (0cm);
\draw [ fill={rgb,255:red,0; green,0; blue,0} , line width=2pt ] (11.75,15.25) circle (0.25cm);
\draw [ fill={rgb,255:red,0; green,0; blue,0} , line width=2pt ] (10.75,14.75) circle (0.25cm);
\draw [ fill={rgb,255:red,0; green,0; blue,0} , line width=2pt ] (9.5,14.5) circle (0.25cm);
\draw [ fill={rgb,255:red,0; green,0; blue,0} , line width=2pt ] (9,14) circle (0cm);
\draw [ color={rgb,255:red,255; green,0; blue,0} , line width=2pt , rotate around={-128:(6.75,12)}] (6.75,12) ellipse (2.25cm and 0.75cm);
\draw [ color={rgb,255:red,0; green,0; blue,255} , line width=2pt , rotate around={-153:(9,13.75)}] (9,13.75) ellipse (2.5cm and 1cm);
\draw [ fill={rgb,255:red,0; green,0; blue,0} , line width=2pt ] (7.75,13.25) circle (0.25cm);
\draw [ fill={rgb,255:red,0; green,0; blue,0} , line width=2pt ] (9.25,13.5) circle (0.25cm);
\draw [ color={rgb,255:red,255; green,0; blue,0} , line width=2pt , rotate around={-47:(10,12.75)}] (10,12.75) ellipse (1.5cm and 0.75cm);
\draw [ fill={rgb,255:red,0; green,0; blue,0} , line width=2pt ] (12.75,14.75) circle (0.25cm);
\draw [ fill={rgb,255:red,0; green,0; blue,0} , line width=2pt ] (10.5,12.25) circle (0.25cm);
\draw [ fill={rgb,255:red,0; green,0; blue,0} , line width=2pt ] (5.75,10.75) circle (0.25cm);
\draw [ fill={rgb,255:red,0; green,0; blue,0} , line width=2pt ] (6.5,11.75) circle (0.25cm);
\draw [ fill={rgb,255:red,0; green,0; blue,0} , line width=2pt ] (13.75,14) circle (0.25cm);
\draw [ fill={rgb,255:red,0; green,0; blue,0} , line width=2pt ] (14.5,13.25) circle (0.25cm);
\draw [ fill={rgb,255:red,0; green,0; blue,0} , line width=2pt ] (15.5,12.25) circle (0.25cm);
\draw [ fill={rgb,255:red,0; green,0; blue,0} , line width=2pt ] (16.5,11.5) circle (0.25cm);
\draw [ fill={rgb,255:red,0; green,0; blue,0} , line width=2pt ] (11.5,10.75) circle (0.25cm);
\draw [ color={rgb,255:red,0; green,0; blue,255} , line width=2pt , rotate around={-216:(13.75,14)}] (13.75,14) ellipse (2cm and 0.75cm);
\draw [ color={rgb,255:red,255; green,0; blue,0} , line width=2pt , rotate around={-41:(15.5,12.25)}] (15.5,12.25) ellipse (2cm and 0.75cm);
\draw [ color={rgb,255:red,0; green,0; blue,255} , line width=2pt , rotate around={-53:(11,11.5)}] (11,11.5) ellipse (1.5cm and 0.75cm);
\draw [ color={rgb,255:red,255; green,0; blue,0} , line width=2pt , rotate around={-122:(12.75,12.5)}] (12.75,12.5) ellipse (3.5cm and 0.5cm);
\end{circuitikz}
}
\caption{Two epistemic frames for $\Amsf=\{\text{red},\text{blue}\}$. $\Ffrak$ a~cluster tree; $\Gfrak$ is \textbf{not}.}
\label{fig:clustertree}
\end{figure}

The following statement now follows from Theorem~\ref{theorem:TS5invGcompleteness} since the models produced by tab\-leaux are cluster trees.
\begin{restatable}{corollary}{clustertree}\label{cor:clustertree}
For any $\phi\in\modalLinv$, it holds that $v(\phi,w)=x$ in some $\eF$-model~$\Mfrak$ and $w\!\in\!\Mfrak$ iff there are a cluster tree $\eF$-model $\Mfrak'$ and $w'\!\in\!\Mfrak'$ s.t.\ \mbox{$v(\phi,w')\!=\!x$}, $\Hmc(\Mfrak)\!=\!\Omc(\lmc(\phi))$, and the root of~$\Mfrak'$ is the proper cluster to which $w'$~belongs.
\end{restatable}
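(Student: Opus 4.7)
The $(\Leftarrow)$ direction is immediate, since every cluster tree $\eF$-model is in particular an $\eF$-model. For $(\Rightarrow)$, the plan is to \emph{cluster-unravel} the given model $\Mfrak$ around~$w$ up to the modal depth of~$\phi$ (which is bounded above by $\lmc(\phi)$) and verify that the unraveling preserves the value of~$\phi$ at the root.

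Concretely, I designate one agent $a_0\in\Amsf$ and take the root cluster to be a fresh disjoint copy of the $R_{a_0}$-equivalence class of~$w$ in~$\Mfrak$; write $w'$ for the copy of~$w$. For every other agent $b\neq a_0$ and every state $\sigma$ of the root, I attach a fresh singleton $R_b$-cluster at~$\sigma$, as required by reflexivity of~$R_b$. Inductively, whenever a state $\sigma$ has been created at depth strictly less than the modal depth of~$\phi$ and corresponds to some $u\in\Mfrak$, for each agent $b$ whose proper cluster at~$\sigma$ has not yet been explored I create a \emph{fresh} cluster $\cluster^\sigma_b$ consisting of~$\sigma$ together with fresh copies of every other member of $u$'s $R_b$-class in~$\Mfrak$. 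The $T$-functions and propositional valuation at each fresh copy are inherited from the corresponding state in~$\Mfrak$. Two distinct proper clusters of the result $\Mfrak'$ meet in at most the single state from which the later one was spawned, so $\Mfrak'$ is a cluster tree; its height is bounded by the modal depth of~$\phi$ plus one, which is $\Omc(\lmc(\phi))$, and conditions (i)--(iii) of Definition~\ref{def:epistemicFmodels} transfer from~$\Mfrak$ by construction.

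The main technical step, and the principal obstacle, is the accompanying truth lemma: for every subformula $\psi$ of~$\phi$ and every state $\sigma$ of~$\Mfrak'$ at depth~$k$ corresponding to some $u\in\Mfrak$, with $k$ plus the modal depth of~$\psi$ at most the modal depth of~$\phi$, one has $v'(\psi,\sigma)=v(\psi,u)$. Propositional cases are routine. For $\psi=\Box_a\chi$, the construction supplies a bijection between the $R'_a$-successors of~$\sigma$ in~$\Mfrak'$ and the $R_a$-successors of~$u$ in~$\Mfrak$, namely the projection to the original state; by the induction hypothesis at depth~$k+1$ the infima of $v'(\chi,\cdot)$ and $v(\chi,\cdot)$ over these two sets coincide, and since $T'_a(\sigma)=T_a(u)$ the maximum-from-below operation used in the $\eF$-semantics returns the same value on both sides. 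Applying the lemma at the root with $\psi=\phi$ gives $v'(\phi,w')=v(\phi,w)=x$, as claimed.

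The construction can be read as the semantic counterpart of the tableau in Figure~\ref{fig:rules}: its modal rules already place each newly generated state into the current cluster together with \emph{fresh} clusters for every other agent, so, as already exploited in the proof of Theorem~\ref{theorem:TS5invGcompleteness}, any realising model built from a complete open branch is automatically a cluster tree of height bounded by the depth of nested modal rule applications. This alternative framing covers the case $x=1$ directly from completeness; the unraveling above is needed to obtain arbitrary target values of~$x$.
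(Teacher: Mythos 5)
Your proof is correct in substance, but it follows a genuinely different route from the paper. The paper dispatches the corollary as an immediate consequence of Theorem~\ref{theorem:TS5invGcompleteness}: one seeds a tableau with $\{w{:}\phi\leq x,\ w{:}\phi\geq x\}\cup\{w\in\cluster_a\mid a\in\Amsf\}$, observes that soundness forces some branch to stay open, and notes that the realising model extracted from a complete open branch is by construction a cluster tree of height at most $\Amc(\phi)+1$. You instead give a self-contained, purely semantic cluster-unravelling with a depth-guarded truth lemma. Both work; yours has the virtue of not depending on the calculus at all (and makes the key invariant --- the projection bijection between $R'_a$-classes in $\Mfrak'$ and $R_a$-classes in $\Mfrak$, which relies on $R_a$ being an equivalence relation, together with $T'_a(\sigma)=T_a(u)$ --- fully explicit), while the paper's route is a one-liner given Theorem~\ref{theorem:TS5invGcompleteness} and yields the sharper height bound in terms of the modal \emph{alternation} depth $\Amc(\phi)$ rather than the modal depth (e.g.\ $\Box_a\Box_a\dots\Box_a p$ needs only one cluster); both bounds are $\Omc(\lmc(\phi))$, so the statement is unaffected. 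Two small remarks. First, your closing claim that the tableau framing ``covers the case $x=1$ directly'' and that the unravelling is ``needed to obtain arbitrary target values of $x$'' undersells the paper's argument: the two seeding constraints $w{:}\phi\leq x$ and $w{:}\phi\geq x$ target any $x$, so no separate construction is needed. Second, your base case attaches a ``fresh singleton $R_b$-cluster'' at each root state and your inductive step only expands clusters ``not yet explored''; read literally these clash (a singleton already attached might count as explored). The intended reading --- singletons are reflexive placeholders that get expanded into full copies of the corresponding $R_b$-class whenever the depth budget allows --- is clear from context and makes the truth lemma go through, but you should phrase it so that the expansion is not blocked by the placeholder.
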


We can now use Corollary~\ref{cor:clustertree} to obtain the equivalence between the semantics. The next statement can be shown similarly to~\cite[Theorem~1]{BilkovaFergusonKozhemiachenko2025TARK}.
\begin{restatable}{theorem}{semanticsequivalence}\label{theorem:semanticsequivalence}
An $\modalLinv$-formula $\phi$ is $\crispsfiveinvG$-valid iff it is $\eF$-valid.
\end{restatable}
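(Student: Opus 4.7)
The plan is to use the duality between validity and satisfiability noted after~\eqref{equ:connectives}: it suffices to show that an $\modalLinv$-formula is $\crispsfiveinvG$-satisfiable iff it is $\eF$-satisfiable. I prove both directions by converting countermodels between the two semantics.

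\textbf{Standard $\Rightarrow$ $\eF$.} Given a $\crispsfiveinvG$-model $\Mfrak=\langle W,\langle R_a\rangle_{a\in\Amsf},v\rangle$ with $v(\phi,w_0)=1$, I keep the frame and valuation and, for every $w\in W$ and $a\in\Amsf$, define
\[
T_a(w)=\{0,\tfrac{1}{2},1\}\cup\{v(\Box_a\psi,w),\,1-v(\Box_a\psi,w)\mid\Box_a\psi\text{ is a subformula of }\phi\}.
\]
Each $T_a(w)$ is finite, contains $\{0,\tfrac{1}{2},1\}$, is closed under $x\mapsto 1-x$, and, because in $\sfive$ the value of $\Box_a\psi$ is constant on each $R_a$-cluster, also satisfies $wR_aw'\Rightarrow T_a(w)=T_a(w')$. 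A routine induction on subformulas of $\phi$ then shows that the $\eF$-value of every subformula equals its standard value: in the modal clause, the standard infimum is itself a member of $T_a(w)$ by construction, so it is the maximum element of $T_a(w)$ below that infimum.

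\textbf{$\eF$ $\Rightarrow$ Standard.} Using Corollary~\ref{cor:clustertree}, I fix a finite cluster-tree $\eF$-model $\Mfrak$ with $v(\phi,w_0)=1$, where $w_0$ lies in the root cluster. I build a standard model $\Mfrak^*$ by adjoining, for every proper $R_a$-cluster $\cluster$ and every subformula $\Box_a\psi$ of $\phi$ such that $t=v(\Box_a\psi,\cluster)$ lies strictly below the genuine infimum of $v(\psi,\cdot)$ over $\cluster$, a fresh ``witness'' state $w^{\cluster,\Box_a\psi}$. This witness is placed into $\cluster$ (so it becomes $R_a$-related to every member of $\cluster$) and into a fresh singleton $R_{a'}$-cluster for every $a'\neq a$, which preserves the cluster-tree structure. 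Its propositional valuation is chosen by recursion on modal depth so that $v(\psi,w^{\cluster,\Box_a\psi})=t$. A simultaneous induction on subformulas of $\phi$ then confirms that every subformula has the same value at every original state of $\Mfrak$ in $\Mfrak^*$ as it had in $\Mfrak$, so in particular $v^*(\phi,w_0)=1$.

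\textbf{Main obstacle.} The delicate point lies in the hard direction: the witness for $\Box_a\psi$ must realise $v(\psi,\cdot)=t$ without disturbing the values of the other subformulas of $\phi$ at the pre-existing states. Two design choices largely tame this difficulty: (i) placing each witness in singleton clusters for the remaining agents leaves every other $R_{a'}$-cluster of $\Mfrak$ untouched; and (ii) processing subformulas by increasing modal depth guarantees that, by the time a witness is used in an infimum for $\Box_a\psi$, its inner modal subformulas already carry the correct values at that witness. The remaining technical bookkeeping parallels the construction of~\cite[Theorem~1]{BilkovaFergusonKozhemiachenko2025TARK}.
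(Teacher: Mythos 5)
Your first direction (standard $\Rightarrow$ $\eF$) is correct and is essentially the paper's own argument: enrich the given $\crispsfiveinvG$-model with $T_a(w)$ consisting of the values of the $\Box_a$-subformulas and their $\invol$-negations together with $\{0,\tfrac{1}{2},1\}$; the induction goes through because the true infimum of each relevant $\psi$ over a cluster is then itself a member of $T_a(w)$.

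The hard direction, however, has a genuine gap, and it is one the paper explicitly warns about in Section~\ref{sec:Fmodels}: $\eF$-models are \emph{not} witnessed models in disguise. Your construction adjoins, for each $\Box_a\psi$ whose $\eF$-value $t$ lies strictly below the genuine infimum, a fresh state at which $\psi$ takes value exactly $t$; this forces the resulting standard model to be witnessed on the subformulas of $\phi$, which is too strong. Concretely, take the paper's own example: $W=\{w\}$, $R_a=\{\langle w,w\rangle\}$, $T_a(w)=\{0,\tfrac{1}{2},1\}$, $v(p,w)=\tfrac{3}{4}$. Here $v(\Box_a{\invol}p,w)=0$ (so $v(\lozenge_ap,w)=1$) while $v(\lozenge_a\triangle p,w)=0$. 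Your recipe adds a witness $w'$ with $v({\invol}p,w')=0$, i.e.\ $v(p,w')=1$; but then $v(\triangle p,w')=1$, so in the standard model the value of the sibling subformula $\lozenge_a\triangle p$ at $w$ jumps from $0$ to $1$, and $\triangle\lozenge_ap\rightarrow\lozenge_a\triangle p$ can no longer be refuted. The propositional valuation at the witness is completely determined ($p$ must equal $1$ there), so no ordering by modal depth or cluster bookkeeping can repair this: a single witness for one box necessarily perturbs the infima of other boxes over the same cluster. This is precisely why the paper instead takes \emph{countably many} copies of each cluster, re-valued through order embeddings $h^a_k$ that squeeze each gap $(\alpha_i,\alpha_{i+1})$ between consecutive points of $T_a$ toward its endpoints: the infimum over all copies then equals the required element of $T_a$ \emph{without being attained}, and since the embeddings are involutive order-isomorphisms fixing $0$, $\tfrac{1}{2}$, $1$ and the points of $T_a$, all other subformula values are preserved. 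You would need to replace your single-witness step by such an approximation argument for this direction to go through.
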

\begin{proof}
If $\phi$ is \emph{invalidated} in an $\crispsfiveinvG$-model~$\Mfrak$, we reuse it as an $\eF$-model. We define $T_a(w)$'s as the sets of values of $\Box_a$-subformulas of $\phi$ and their $\invol$-negations at~$w$, adding $\{0,\tfrac{1}{2},1\}$ if needed. This way, $T_a$'s will be finite and satisfy the conditions from Definition~\ref{def:epistemicFmodels}, and the values of modal formulas will be preserved. Hence, $\phi$~will be invalidated in an $\eF$-model. For the converse direction, we use the proofs of Lemmas~2 and~4 in~\cite{CaicedoMetcalfeRodriguezRogger2013} and Corollary~\ref{cor:clustertree}. Given $\phi$, a~cluster-tree $\eF$-model $\Mfrak$ and $w\in\Mfrak$ in its root cluster, we construct an $\crispsfiveinvG$-model $\widehat{\Mfrak}$ and $\widehat{w}\in\widehat{\Mfrak}$ s.t.\ $v(\phi,w)=\widehat{v}(\phi,\widehat{w})$. We consider countably infinitely many order embeddings $h^a_k:[0,1]\rightarrow[0,1]$ for each $a\in\Amsf$ with $h^a_k(0)=0$, $h^a_k(1-x)=1-h^a_k(x)$, and $h^a_k(1)=1$ that ‘squeeze’ the open intervals between members of $T_a(w)$ closer to either their lower or upper bounds. Then we take infinitely many copies of the clusters in the original $\Fmsf$-model without $T_a$'s obtained via these embeddings. The resulting infima and suprema will coincide with the next smaller or larger member of $T_a(w)$. Thus, the required values of the formulas at $w$ in the original $\Fmsf$-model will be preserved.
%Namely, we first show that every formula can be transformed into an \emph{irredundantly nested} one, i.e., a formula that does not contain subformulas of the form $\Box_a\Box_a\phi$, $\Box_a(\phi\wedge\Box_a\chi)$, etc. Then, given an irredundantly nested $\phi$ a~cluster-tree $\eF$-model $\Mfrak$ and $w\in\Mfrak$ in its root cluster s.t.\ $\Hmc(\Mfrak)$ is greater or equal to the modal depth of~$\phi$, we show how to construct an $\crispsfiveinvG$-model $\widehat{\Mfrak}$ and $\widehat{w}\in\widehat{\Mfrak}$ s.t.\ $v(\phi,w)=\widehat{v}(\phi,\widehat{w})$.
% We consider countably infinitely many order embeddings $h^a_k:[0,1]\rightarrow[0,1]$ for each $a\in\Amsf$ with $h^a_k(0)=0$, $h^a_k(1-x)=1-h^a_k(x)$, and $h^a_k(1)=1$ that ‘squeeze’ the open intervals between members of $T_a(w)$ closer to either their lower or upper bounds. Then we take infinitely many copies of the original $\Fmsf$-model without $T_a$'s obtained via these embeddings. The resulting infima and suprema will coincide with the next smaller or larger member of $T_a(w)$. Thus, the required values of the formulas at $w$ in the original F-model will be preserved.
\end{proof}

% $\pspace$-completeness of $\crispsfiveinvG$ now follows from Theorems~\ref{theorem:complexity} and~\ref{theorem:semanticsequivalence}.
\begin{corollary}\label{cor:complexity}~
\begin{enumerate}[noitemsep,topsep=1pt]
\item It is $\pspace$-complete to decide whether $\phi\in\modalLinv(\Amsf)$ is $\crispsfiveinvG$-valid if $|\Amsf|\geq2$.
\item It is $\conp$-complete to decide whether $\phi\in\modalLinv(\Amsf)$ is $\crispsfiveinvG$-valid if $|\Amsf|=1$.
\end{enumerate}
\end{corollary}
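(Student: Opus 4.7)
The plan is to obtain Corollary~\ref{cor:complexity} as an immediate consequence of Theorems~\ref{theorem:complexity} and~\ref{theorem:semanticsequivalence}, which together do all the real work. The idea is that Theorem~\ref{theorem:semanticsequivalence} establishes that the set of $\crispsfiveinvG$-valid formulas coincides with the set of $\eF$-valid formulas, and Theorem~\ref{theorem:complexity} gives the stated complexity bounds for $\eF$-validity; so we merely need to transport the bounds along the equivalence.

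Concretely, for Item~1, suppose $|\Amsf|\geq 2$. Given $\phi\in\modalLinv(\Amsf)$, by Theorem~\ref{theorem:semanticsequivalence}, $\phi$ is $\crispsfiveinvG$-valid iff $\phi$ is $\eF$-valid. Hence the decision problem for $\crispsfiveinvG$-validity reduces in constant time (the identity reduction on inputs) to the decision problem for $\eF$-validity, and vice versa. Since by Theorem~\ref{theorem:complexity}(1) the latter is $\pspace$-complete, so is the former. For Item~2, the same reduction combined with Theorem~\ref{theorem:complexity}(2) yields $\conp$-completeness when $|\Amsf|=1$.

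There is essentially no obstacle here, since the hard work is already done. The only thing to double-check is that the equivalence of Theorem~\ref{theorem:semanticsequivalence} is indeed a plain semantic equivalence on the same formula (not, e.g., via some non-trivial translation that could blow up the size), so that the two decision problems are literally the same problem up to identity on instances. A quick glance at the statement of Theorem~\ref{theorem:semanticsequivalence} confirms this: $\phi$ is $\crispsfiveinvG$-valid iff $\phi$ is $\eF$-valid for the same $\phi\in\modalLinv$. Hence the complexity bounds transfer verbatim, giving both halves of the corollary.
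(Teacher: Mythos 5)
Your proposal is correct and matches the paper's intent exactly: the corollary is stated immediately after Theorem~\ref{theorem:semanticsequivalence} precisely so that it follows by transporting the bounds of Theorem~\ref{theorem:complexity} along the identity reduction given by the coincidence of $\crispsfiveinvG$-validity and $\eF$-validity. Your added check that the equivalence involves no formula translation is a sensible (if routine) sanity check.
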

\section{Conclusion\label{sec:conclusion}}
In this paper, we presented an epistemic G\"{o}del logic with involutive negation~$\crispsfiveinvG$. We built its semantics with the finite model property and used it to show that $\crispsfiveinvG$-validity is $\pspace$-complete ($\conp$-complete in the single agent case). Our next steps are as follows. First, we are not aware of any axiomatisation of $\crispsfiveinvG$ (nor of any G\"{o}del modal logic with $\invol$). Thus, it makes sense to provide one. Second, we plan to expand the $\eF$-model-based semantics for the case of \emph{doxastic} G\"{o}del logics. Finally, it would be instructive to adapt the $\Fmsf$-model-based semantics for G\"{o}del modal logics with group knowledge operators such as distributed or common knowledge.
\bibliographystyle{splncs04}
\bibliography{reference}
\clearpage
\appendix
\section{Proofs of Section~\ref{sec:tableaux}}
\TsfiveinvGcompleteness*
\begin{proof}
We begin with soundness. It is clear from Definitions~\ref{def:TS5invG} and~\ref{def:realisingmodel} that if $\Bmc$ is a~closed branch, then there is no~$\Mfrak$ and $\real$ s.t.\ $\Mfrak$ realises~$\Bmc$ under~$\real$. Thus, it suffices to establish that if $\Mfrak$ realises the premise of a~rule, then it should realise at least one conclusion as well. We consider the case of the $\Box_\triangleleft$ rule as an example. In particular, we assume that the premise is as follows: $\{w{:}\Box_a\phi\leqslant\Tfrak,w\in\cluster_a\}$. Let $\Mfrak=\langle W,\langle R\rangle_{a\in\Amsf},\langle T_a\rangle_{a\in\Amsf},v\rangle$ realise $w{:}\Box_a\phi\leqslant\Tfrak$. That is, $v(\Box_a\phi,w)\leqslant\real(\Tfrak)$. If $\real(\Tfrak)=1$, then $1\leqslant\Tfrak$ (the left conclusion) is realised. Consider the case where $\real(\Tfrak)<1$. Then (cf.~Definition~\ref{def:epistemicFmodels}) $\max\{x\in T_a(w)\mid x\leq\inf\{v(\phi,w')\mid wR_aw'\}\}\leq\real(\Tfrak)$. This means that there is a~state $w'$ $R_a$-accessible from $w$ (i.e., in the same $R_a$-cluster as~$w$) and $x,x'\in T_a(w)$ s.t.\ $x'$ is the immediate successor of $x$, $v(\Box_a\phi,w)=x$, and $v(\phi,w')<x'$. Now setting $\real(t(\cluster_a))=x$ and $\real(t^s(\cluster_a))=x'$, we have that $t(\cluster_a)\leqslant\Tfrak$ and $w':\phi<t^s(\cluster_a)$ are realised. Hence, as $T_a(w)=T_a(w')$ when $wR_aw'$, the right conclusion is realised.

Let us now proceed to completeness. We will prove that for every complete open branch, there is a~model $\Mfrak$ and a~realisation $\real$ s.t.\ $\Mfrak$ realises $\Bmc$ under $\real$. Let $\Bmc$ be a~complete open branch and consider the system of inequalities $\Bmc^\tmc$ as shown in~\eqref{equ:closure} and its solution as specified in Definition~\ref{def:TS5invG}. Now define $\Mfrak$ as follows:
\begin{align*}
W&=\{w\mid w\text{ occurs in }\Bmc\}\\
wR_aw'&\text{ iff }\exists\cluster_a\in\ClusterSet,a\in\Amsf:w\in\cluster_a\text{ and }w'\in\cluster_a\text{ occur on }\Bmc\\
T_a(w)&=\{x_{t(\cluster_a)}\mid w\in\cluster_a\text{ and }t(\cluster_a)\text{ occur in }\Bmc\}\cup\\
&\quad~\{x_{t^s(\cluster_a)}\mid w\in\cluster_a\text{ and }t^s(\cluster_a)\text{ occur in }\Bmc\}\cup\{0,\tfrac{1}{2},1\}\\
v(p,w)&=x_{w{:}p}
\end{align*}
for every $w$, $w'$, and $p\in\Prop$ occurring in $\Bmc$. We also set $\real(w)=w$, $\real(\Tfrak)=x_\Tfrak$, and $\real(w{:}\chi)=x_{w{:}\chi}$ for every $w\in\WorldLabels$, $\Tfrak\in\valueterm$, and $w{:}\chi\in\LF$ occurring on~$\Bmc$. It remains to show that all constraints are realised by $\Mfrak$ under $\real$.

For all constraints of the form $\Tfrak\triangledown\Tfrak'$ that do not contain labelled formulas, we have $\real(\Tfrak)\triangledown\real(\Tfrak')$ by the construction of~$\Mfrak$ since $\Bmc$ is a~complete open branch. Consider now constraints of the form $u:\psi\triangledown\Tfrak$. We proceed by induction on formulas. If $p\in\Prop$, $w{:}p\triangledown\Tfrak$ is realised by the construction of~$\Mfrak$. The cases of constraints whose principal connective is $\invol$, $\wedge$, or $\rightarrow$ can be shown by simple applications of the induction hypothesis.

We consider $w{:}\Box_a\phi\!\leqslant\!\Tfrak$ as an example and show that \mbox{$v(\Box_a\phi,\!w)\!\leq\!\real(\Tfrak)$}. Since $w{:}\Box_a\phi\leqslant\Tfrak$ is in~$\Bmc$ and $\Bmc$ is complete and open, it also contains (a)~$1\leqslant\Tfrak$ or (b) $t(\cluster_a)\leqslant\Tfrak$, $w'{:}\phi<t^s(\cluster_a)$, $w'\in\cluster_a$, and $w'\in\cluster'_{a'}$ for every $a'\in\Amsf$. If (a) is the case, $\Tfrak$ is a~value term (observe from Definition~\ref{def:constraints} that constraints can contain at most one labelled formula), whence $1\leqslant\Tfrak$ is realised by the construction of $\Mfrak$. Thus, $\real(\Tfrak)=1$ and $v(\Box_a\phi,w)\leq\real(\Tfrak)$, as required. If (b) is the case, then by the induction hypothesis $t(\cluster_a)\leqslant\Tfrak$ and $w':\phi<t^s(\cluster_a)$ are realised, and~$w$ and~$w'$ belong to the same $R_a$-cluster. Thus, $x_{t(\cluster_a)}\leq\real(\Tfrak)$ and $v(\phi,w')<x_{t^s(\cluster_a)}$ with $\{x_{t(\cluster_a)},x_{t^s(\cluster_a)}\}\subseteq T(w)$ and $x_{t^s(\cluster_a)}$ being the immediate successor of $x_{t(\cluster_a)}$. But in this case, it is clear that $v(\Box_a\phi,w)=\max\{x\in T(w)\mid x\leq\inf\{v(\phi,w')\mid wR_aw'\}\}\leq x_{t(w)}$, whence, $v(\Box_a\phi,w)\leq\real(\Tfrak)$, as required.
\end{proof}
% The idea is to make them terminating and explicit (but without loop checks that we have in~\cite{HalpernMoses1992}). We do this by keeping track of the clusters in the model.

% We begin the tableau with $\{w{:}\phi\}\cup\{w\in\cluster^0_i\mid i\leq n\}$
% \begin{align*}
% \dfrac{\begin{matrix}w{:}\lozenge_i\phi,~w\in\cluster_a\end{matrix}}{\begin{matrix}w':\phi,~w'\in\cluster_a\end{matrix}}&&
% \dfrac{w{:}\Box_a\phi,~w\in\cluster_a}{u:\phi}~(u\in\cluster_a)
% \end{align*}

% Since we deal with clusters, it is clear that given $u:\lozenge_a\phi$ and $u':\lozenge_a\phi$ s.t.\ $\{u,u'\}\subseteq\cluster_a$ for some $R_a$-cluster, we only need to apply the $\lozenge$-rule once. Thus, the tableau rules are strongly terminating.

% This is how it looks in $\crispsfiveinvG(n)$. As $\Box$ and $\lozenge$ are interdefinable, we only need rules for $\Box_\triangleright$ and $\Box_\triangleleft$. The idea is that every introduced state is added to fresh clusters w.r.t.\ other relations.
\section{Proofs of Section~\ref{sec:complexity}}
We supply the missing details of the proof of Theorem~\ref{theorem:complexity}. Namely, we show that the validity of $\modalLinv(\Amsf)$-formulas with $\Amsf=\{a\}$ is $\conp$-complete.
\complexity*
\begin{proof}
To see that Item~2 holds, observe that if $\Amsf=\{a\}$, we will generate only one cluster of size $\Omc(\lmc(\phi))$. Furthermore, every cluster will contain $\Omc(\lmc(\phi))$ $\Tmsf$-symbols (at most two for each $\Box$ occurring in~$\phi$). Thus, applying Theorem~\ref{theorem:TS5invGcompleteness}, we have that if $\phi$ is not valid on $\eF$-models, there is some $\eF$-model $\Mfrak$ of the size $\Omc(\lmc(\phi))$ s.t.\ $v(\phi,u)\neq1$ for some $u\in\Mfrak$ and for every $w\in\Mfrak$, it holds that $|T_a(w)|=\Omc(\lmc(\phi))$. It is clear that given such a~model and a~state~$w$, it takes only polynomial time w.r.t.\ $\lmc(\phi)$ to determine whether $v(\phi,w)=1$. Thus, the validity of single-agent $\modalLinv$-formulas over $\eF$-models is in~$\conp$.

%it suffices to guess a~complete open branch~$\Bmc$ of the tableau for $\{w{:}\phi<1,w\in\cluster_a\}$. $\Bmc$ will contain $\Omc(\lmc(\phi))$ states each of which will contain labelled subformulas of $\phi$ and at most two $\Tmsf$-symbols for each modality. Thus, the size of~$\Bmc$ is polynomial w.r.t.\ $\lmc(\phi)$. This means that it takes non-deterministic polynomial time w.r.t.\ the size of~$\Bmc$ (and hence, w.r.t.\ $\lmc(\phi)$) to guess a~solution to the system of inequalities corresponding to~$\Bmc$ (recall Definition~\ref{def:TS5invG}) and extract a~counter-model of the size $\Omc(\lmc(\phi))$ for $\phi$ from it.
\end{proof}
\section{Proofs of Section~\ref{sec:equivalence}}
%In fact, a~model $\Mfrak=\langle W,\langle R_a\rangle_{a\in\Amsf},v\rangle$ generated by a state $w\in W$ is a~\emph{tree of clusters} in the following sense.
In this section, we supply the proofs of Corollary~\ref{cor:clustertree} and Theorem~\ref{theorem:semanticsequivalence}. For the sake of brevity, we assume that $\Amsf=\{a,b\}$. The cases of larger sets of agents can be dealt with in a~similar manner.

We begin with a~technical notion.
\begin{definition}[Modal alternation depth]\label{def:alternationdepth}
Let $\phi\in\modalLinv(\Amsf)$ be represented as a~\emph{syntactical rooted tree} and let further $\Amsf=\{a_1,\ldots,a_m\}$.

Let $\nmbf$ be a~node in the tree for $\phi$. We say that it is \emph{modal} if $\nmbf\in\{\Box_a\mid a\in\Amsf\}$ and \emph{propositional}, otherwise. Let, further, $\pmc(\nmbf)$ be the \emph{parent of $\nmbf$}. We define the \emph{alternation number of~$\nmbf$} ($\Nmc(\nmbf)$) as follows.
\begin{itemize}[noitemsep,topsep=1pt]
\item If $\nmbf$ is the root of the tree and propositional, then $\Nmc(\nmbf)=0$.
\item If $\nmbf$ is the root of the tree and modal, then $\Nmc(\nmbf)=1$.
\item If $\nmbf$ is \emph{not the root} and propositional, then $\Nmc(\nmbf)=\Nmc(\pmc(\nmbf))$.
\item If $\nmbf$ is \emph{not the root}, is modal, and the previous \emph{modal} node $\nmbf$ is s.t.\ $\nmbf'=\nmbf$, then $\Nmc(\nmbf)=\Nmc(\pmc(\nmbf))$.
\item If $\nmbf$ is \emph{not the root}, is modal, and the previous \emph{modal} node $\nmbf$ is s.t.\ $\nmbf'\neq\nmbf$ or all nodes above $\nmbf$ are propositional, then $\Nmc(\nmbf)=\Nmc(\pmc(\nmbf))+1$.
\end{itemize}

The modal alternation depth of $\phi$ ($\Amc(\phi)$) is defined as follows:
\begin{align*}
\Amc(\phi)&=\max\{\Nmc(\nmbf)\mid\nmbf\text{ is a~leaf in the tree of }\phi\}
\end{align*}
\end{definition}

For example, let $\phi=\Box_a(\Box_a(p\wedge\invol\Box_a q)\wedge\invol\Box_b(r\rightarrow\invol\Box_b(s\wedge\Box_at)))\wedge u$. Its syntactical tree is given in Fig.~\ref{fig:syntacticaltree}. To illustrate the definition, we write alternation numbers for each node. One can see that $\Amc(\phi)=3$.
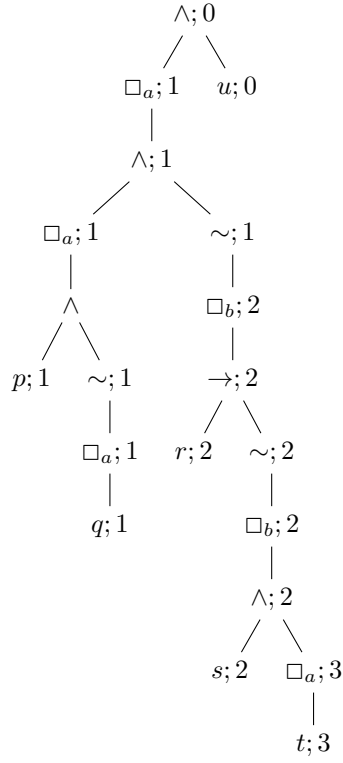
\begin{figure}
\centering
\begin{forest}
[$\wedge;0$[$\Box_a;1$
[$\wedge;1$
[$\Box_a;1$[$\wedge$[$p;1$][$\invol;1$[$\Box_a;1$[$q;1$]]]]][$\invol;1$[$\Box_b;2$[$\rightarrow;2$[$r;2$][$\invol;2$[$\Box_b;2$[$\wedge;2$[$s;2$][$\Box_a;3$[$t;3$]]]]]]]]]][$u;0$]]
\end{forest}
\caption{Syntactical tree for $\Box_a(\Box_a(p\wedge\invol\Box_a q)\wedge\invol\Box_b(r\rightarrow\invol\Box_b(s\wedge\Box_at)))\wedge u$.}
\label{fig:syntacticaltree}
\end{figure}

We now show that $\eF$-satisfiability can be checked over cluster-tree models.
\clustertree*
\begin{proof}
The statement follows immediately from Theorem~\ref{theorem:TS5invGcompleteness} and the fact that satisfiability and non-validity w.r.t.\ $\Fmsf$-models are reducible to one another. Indeed, observe that our tableaux generate counter-models of $\phi$ that are cluster trees of the needed shape and of the height at most $\Amc(\phi)+1$. To obtain a~model satisfying $v(\phi,w)=x$, we simply build a~tableau starting with $\{w{:}\phi\!\leq\!x,~w{:}\phi\!\geq\!x\}\cup\{w\in\cluster_a\mid a\in\Amsf\}$. As $\Amc(\phi)=\Omc(\lmc(\phi))$ for every $\phi\in\modalLinv$, the result follows.
\end{proof}

Let us proceed to the proof of Theorem~\ref{theorem:semanticsequivalence}. We begin with some lemmas. The following statement can be obtained in a~standard manner by induction on $\phi\in\modalLinv$.
\begin{lemma}\label{lemma:preservation}
Fix a function $g:[0,1]\rightarrow[0,1]$ such that $g(0)=0$, $g(1)=1$, $g(1-x)=1-g(x)$, and $x\leq x'$ iff $g(x)\leq g(x')$. For an $\eF$-model $\Mfrak=\langle W,\langle R_a\rangle_{a\in\Amsf},\langle T_a\rangle_{a\in\Amsf},v\rangle$, define a model $\widehat{\Mfrak}$ such that $\widehat{W}=W$, $\langle\widehat{R}_a\rangle_{a\in\Amsf}=\langle R_a\rangle_{a\in\Amsf}$,  $\langle \widehat{T}_a\rangle_{a\in\Amsf}=\langle g[T]_a\rangle_{a\in\Amsf}$, and $\hat{v}(p,w)=g(v(p,w))$ for every $w\in W$  and $p\in\Prop$. Then it holds that $\hat{v}(\phi,w)=g(v(\phi,w))$ for each $\phi\in\modalLinv$ and $w\in W$.
\end{lemma}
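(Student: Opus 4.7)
The plan is to argue by induction on the construction of $\phi\in\modalLinv$. A preliminary observation is that $\widehat{\Mfrak}$ is a well-formed $\eF$-model: conditions (i)--(iii) of Definition~\ref{def:epistemicFmodels} for $\widehat{T}_a(w)=g[T_a(w)]$ follow from the corresponding conditions on $T_a(w)$ together with $g(0)=0$, $g(1)=1$, and $g(1-x)=1-g(x)$; in particular, substituting $x=\tfrac{1}{2}$ in the last identity gives $g(\tfrac12)=\tfrac12$, so $\{0,\tfrac12,1\}\subseteq g[T_a(w)]$, and closure under $x\mapsto 1-x$ is preserved.

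The base case $\phi=p$ is immediate from the definition of $\hat{v}$. For the propositional connectives: $\phi=\invol\psi$ uses $g(1-x)=1-g(x)$ and the IH; $\phi=\psi\wedge\chi$ uses that strict monotonicity of $g$ makes $\min$ commute with $g$; and $\phi=\psi\rightarrow\chi$ splits on whether $v(\psi,w)\leq v(\chi,w)$, which by the biconditional $x\leq x'\Leftrightarrow g(x)\leq g(x')$ transfers to $\hat{v}(\psi,w)\leq\hat{v}(\chi,w)$ and vice versa, so both clauses of the $\rightarrow$ semantics are matched.

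The main case is $\phi=\Box_a\psi$. Set $\alpha=\inf\{v(\psi,w'):wR_aw'\}$ and $\hat{\alpha}=\inf\{\hat{v}(\psi,w'):wR_aw'\}$, and let $x^\ast=\max\{x\in T_a(w):x\leq\alpha\}$ and $y^\ast=\max\{y\in g[T_a(w)]:y\leq\hat{\alpha}\}$; the goal is $y^\ast=g(x^\ast)$. For $g(x^\ast)\leq y^\ast$, note $x^\ast\leq\alpha\leq v(\psi,w')$ for every $R_a$-successor $w'$, so by monotonicity of $g$ and the IH, $g(x^\ast)\leq g(v(\psi,w'))=\hat{v}(\psi,w')$, hence $g(x^\ast)\leq\hat{\alpha}$; since $g(x^\ast)\in g[T_a(w)]$, this gives $g(x^\ast)\leq y^\ast$. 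Conversely, write $y^\ast=g(z)$ with $z\in T_a(w)$; if we had $z>\alpha$, then some $w'$ would satisfy $v(\psi,w')<z$, and strict monotonicity plus the IH would yield $\hat{v}(\psi,w')<g(z)=y^\ast\leq\hat{\alpha}$, contradicting the definition of $\hat{\alpha}$; therefore $z\leq\alpha$, whence $z\leq x^\ast$ and $y^\ast=g(z)\leq g(x^\ast)$.

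The only genuine subtlety is this modal case: the $\max$-below-$\inf$ expression defining $v(\Box_a\psi,\cdot)$ must commute with $g$, yet $g$ is not assumed continuous. The biconditional $x\leq x'\Leftrightarrow g(x)\leq g(x')$ is exactly what carries the argument through, since it lets an inequality of the form $g(z)\leq\hat{\alpha}$ be transferred back to $z\leq\alpha$ via a witnessing successor $w'$, without ever needing $g$ to preserve infima directly.
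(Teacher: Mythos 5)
Your proof is correct and takes the same route the paper intends: the paper merely asserts that the lemma "can be obtained in a standard manner by induction on $\phi$", and your argument is exactly that induction, with the order-embedding property of $g$ handling the implication and $\Box_a$ cases. The modal case in particular is handled correctly — transferring $g(z)\leq\hat{\alpha}$ back to $z\leq\alpha$ via a witnessing successor is precisely what is needed, since $g$ need not preserve infima.
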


Next, we show that formulas can be transformed in such a~way that one occurrence $\Box_a$ is never in the immediate scope of another.
\begin{definition}\label{def:irredundantlynestedformula}
A~formula is called \emph{irredundantly nested} if in every branch of its syntactic tree between every two nodes $\Box_a$, there is a~node $\Box_{a'}$ for some $a'\neq a$.
\end{definition}

Note that $\Amc(\Box_a\phi)=\Amc(\phi)+1$ for every irredundanly nested~$\phi$. Moreover, $\Amc(\phi)=\Dmc(\phi)$ with $\Dmc(\phi)$ being the modal depth of~$\phi$. The converse, however, is not true: $\Dmc(\Box_a\Box_bp\wedge\Box_a\Box_aq)=\Amc(\Box_a\Box_bp\wedge\Box_a\Box_aq)=2$ but $\Box_a\Box_bp\wedge\Box_a\Box_aq$ is not irredundantly nested.
\begin{lemma}\label{lemma:irredundantreduction}
For every $\phi\!\in\!\modalLinv$, there is an irredundantly nested $\phi^\flat$ s.t.\ \mbox{$\phi\!\leftrightarrow\!\phi^\flat$} is $\crispsfiveinvG$-valid and $\eF$-valid.
\end{lemma}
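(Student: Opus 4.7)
The plan is to proceed by structural induction on $\phi$. For the propositional cases ($\phi$ atomic, $\phi=\invol\psi$, $\phi=\psi_1\wedge\psi_2$, $\phi=\psi_1\to\psi_2$) I would apply the induction hypothesis to the immediate subformulas and reassemble with the same connective; the propositional connectives introduce no modal nodes, so they cannot place two $\Box_a$-nodes on a common branch that were not already there, and the result stays irredundantly nested.

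The only real case is $\phi=\Box_a\psi$. From IH take an irredundantly nested $\psi^\flat\leftrightarrow\psi$ and call a $\Box_a$-subformula of $\psi^\flat$ \emph{exposed} if the syntactic path from the root of $\psi^\flat$ to it uses only propositional connectives. If no exposed $\Box_a$ is present, then $\Box_a\psi^\flat$ is already irredundantly nested. Otherwise I would normalize $\Box_a\psi^\flat$ by rewriting whose soundness in both $\crispsfiveinvG$- and $\eF$-semantics rests on one fact: because $R_a$ is an equivalence relation and $T_a(w)=T_a(w')$ whenever $wR_aw'$ in $\eF$-models, the value of $\Box_a\chi$---hence of any \emph{$a$-rigid} formula (one built from $\Box_a$-subformulas by $\invol$, $\wedge$, $\to$)---is constant on each $R_a$-cluster. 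For such a cluster-constant $\gamma$, the rewrites I would use are
\begin{align*}
\Box_a\gamma&\leftrightarrow\gamma, & \Box_a(\alpha\wedge\beta)&\leftrightarrow\Box_a\alpha\wedge\Box_a\beta,\\
\Box_a(\gamma\to\alpha)&\leftrightarrow\gamma\to\Box_a\alpha, & \Box_a(\alpha\to\gamma)&\leftrightarrow\lozenge_a\alpha\to\gamma
\end{align*}
together with $\Box_a\invol\alpha\leftrightarrow\invol\lozenge_a\alpha$ and the standard propositional rewrites (currying, distribution of $\to$ over $\wedge$). Together these let me reshape the inside of $\Box_a$ into a conjunction of implications in which each exposed $\Box_a$-subformula becomes isolated on an antecedent or consequent and can then be extracted by one of the four rewrites above. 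Iterating top-down strictly decreases a lexicographic measure on the pair (number of exposed $\Box_a$-subformulas, propositional depth above them), so the procedure terminates with an irredundantly nested $\phi^\flat\leftrightarrow\phi$.

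The main obstacle is checking the extraction equivalences in \emph{both} semantics. In $\eF$-models I would argue that an $a$-rigid $\gamma$ takes its value in $T_a(w)$---closure of $T_a(w)$ under the G\"odel operations applied to its own elements is immediate from conditions~(i) and~(ii) of Definition~\ref{def:epistemicFmodels}---so the $\max\{x\in T_a(w)\mid x\leq\cdots\}$ defining $\Box_a\gamma$ simply returns $v(\gamma,w)$, and the same applies to $\lozenge_a\alpha\to\gamma$. In the standard semantics the identities unfold to routine manipulations of infima and suprema over the $R_a$-cluster; the only subtlety is a small case analysis on whether $v(\Box_a\alpha,w)$ equals or strictly dominates $v(\gamma,w)$, since suprema need not be attained and the value of $\alpha\to\gamma$ has a strict/non-strict threshold at $v(\gamma,w)$ that must be handled uniformly between the two semantics.
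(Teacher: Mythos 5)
Your overall strategy --- a kit of extraction equivalences, each verified in both semantics, iterated until the formula is irredundantly nested --- is essentially the paper's. Two of your choices are genuine improvements in presentation: organizing the rewrites around $a$-rigid (cluster-constant) subformulas $\gamma$ rather than only $\Box_a\chi$ and $\invol\Box_a\chi$, and verifying the $\eF$-side directly from the observation that an $a$-rigid formula takes its value in $T_a(w)$ (which is indeed closed under $\min$, $\rightarrow_\Gmsf$, and $1-x$ applied to its own elements), so that $\max\{x\in T_a(w)\mid x\leq\cdots\}$ collapses to the value itself. The paper instead converts its equivalences to negation normal form (introducing $\vee$ and $\coimplies$) and invokes the known equivalence of the single-agent fragments of the two semantics to reduce everything to the standard semantics; your direct argument is self-contained, which is a plus.

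The gap is in the normalization step. You claim that currying and distribution of $\to$ over $\wedge$ suffice to reshape the matrix of $\Box_a$ into a conjunction of implications in which every exposed $\Box_a$-subformula sits as an immediate antecedent or consequent. That is not true in G\"{o}del logic: in $\Box_a((\Box_aq\to p)\to r)$ the exposed $\Box_aq$ sits in the antecedent of an antecedent, and neither currying nor $\to/\wedge$ distribution can bring it to the top level (the antecedent $\Box_aq\to p$ is not $a$-rigid, since $p$ is not, so none of your four extraction rewrites applies). Likewise, once $\invol$ hits a compound formula you leave the $\{\wedge,\to\}$-fragment: $\invol(\alpha\to\beta)$ and $\invol(\alpha\wedge\beta)$ produce $\coimplies$- and $\vee$-shapes that your kit does not cover, which is exactly why the paper's list contains the $\Box_a\invol(\cdots)$ cases and, after normalization, equivalences such as $\Box_a(\psi\coimplies\lozenge_a\chi)\leftrightarrow(\Box_a\psi\coimplies\lozenge_a\chi)$ and $\Box_a(\chi\vee\Box_a\psi)\leftrightarrow(\Box_a\chi\vee\Box_a\psi)$. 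To close the argument you would need either to extend the rewrite kit to $\vee$ and $\coimplies$ and actually prove a normal-form lemma for the matrix of $\Box_a$, or to treat shapes like $(\delta\to p)\to r$ with $\delta$ $a$-rigid by a separate case analysis on the cluster-constant value of $\delta$; neither currying nor your lexicographic measure gets you there as written. (To be fair, the paper's own case list is also keyed only to the top one or two connectives under $\Box_a$ and is terse about deeper nestings, but it does cover the negated and order-based shapes that your kit omits.)
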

\begin{proof}
It suffices to check that the following equivalences hold in both semantics.
\begin{align}\label{equ:irredundant1}
\Box_a\Box_a\chi&\leftrightarrow\Box_a\chi&\Box_a\invol\Box_a\chi&\leftrightarrow\invol\Box_a\chi\\
\Box_a(\chi\!\rightarrow\!\Box_a\psi)&\leftrightarrow(\invol\Box_a\invol\chi\!\rightarrow\!\Box_a\psi)&\Box_a\invol(\chi\!\rightarrow\!\Box_a\psi)&\leftrightarrow\invol(\Box_a\chi\!\rightarrow\!\Box_a\psi)\nonumber\\
\Box_a(\Box_a\chi\!\rightarrow\!\psi)&\leftrightarrow(\Box_a\chi\!\rightarrow\!\Box_a\psi)&\Box_a\invol(\Box_a\chi\!\rightarrow\!\psi)&\leftrightarrow\invol(\Box_a\chi\!\rightarrow\!\invol\Box_a\invol\psi)\nonumber\\
\Box_a(\chi\!\rightarrow\!\invol\Box_a\psi)&\leftrightarrow(\invol\Box_a\invol\chi\!\rightarrow\!\Box_a\psi)&\Box_a\invol(\chi\!\rightarrow\!\invol\Box_a\psi)&\leftrightarrow\invol(\Box_a\chi\!\rightarrow\!\invol\Box_a\psi)\nonumber\\
\Box_a(\invol\Box_a\chi\!\rightarrow\!\psi)&\leftrightarrow(\invol\Box_a\chi\!\rightarrow\!\Box_a\psi)&\Box_a\invol(\invol\Box_a\chi\!\rightarrow\!\psi)&\leftrightarrow\invol(\invol\Box_a\chi\!\rightarrow\!\invol\Box_a\invol\psi)\nonumber\\
&&\Box_a\invol(\chi\wedge\Box_a\psi)&\leftrightarrow\invol(\Box_a\invol\chi\wedge\invol\Box_a\psi)\nonumber\\
&&\Box_a\invol(\chi\wedge\invol\Box_a\psi)&\leftrightarrow\invol(\Box_a\invol\chi\wedge\Box_a\psi)\nonumber
\end{align}
Note that all these equivalences are \emph{single-agent}. Moreover, we can transform all of them into the negation normal form using the following validities: $\invol\Box_a\phi\leftrightarrow\lozenge_a\invol\phi$ and $\invol(\chi\rightarrow\psi)\leftrightarrow(\invol\psi\coimplies\invol\chi)$. Thus, \eqref{equ:irredundant1} can be replaced with the following equivalences.
\begin{align}\label{equ:irredundant2}
\Box_a\Box_a\chi&\leftrightarrow\Box_a\chi&\Box_a\lozenge_a\chi&\leftrightarrow\lozenge_a\chi\\
\Box_a(\chi\rightarrow\Box_a\psi)&\leftrightarrow(\lozenge_a\chi\rightarrow\Box_a\psi)&\Box_a(\lozenge_a\psi\coimplies\chi)&\leftrightarrow(\lozenge_a\psi\coimplies\lozenge_a\chi)\nonumber\\
\Box_a(\Box_a\chi\rightarrow\psi)&\leftrightarrow(\Box_a\chi\rightarrow\Box_a\psi)&\Box_a(\psi\coimplies\lozenge_a\chi)&\leftrightarrow(\Box_a\psi\coimplies\lozenge_a\chi)\nonumber\\
\Box_a(\chi\rightarrow\lozenge_a\psi)&\leftrightarrow(\lozenge_a\chi\rightarrow\lozenge_a\psi)&\Box_a(\Box_a\psi\coimplies\chi)&\leftrightarrow(\Box_a\psi\coimplies\lozenge_a\chi)\nonumber\\
\Box_a(\lozenge_a\chi\rightarrow\psi)&\leftrightarrow(\lozenge_a\chi\rightarrow\Box_a\psi)&\Box_a(\psi\coimplies\Box_a\chi)&\leftrightarrow(\Box_a\psi\coimplies\Box_a\chi)\nonumber\\
&&\Box_a(\chi\vee\lozenge_a\psi)&\leftrightarrow(\Box_a\chi\vee\lozenge_a\psi)\nonumber\\
&&\Box_a(\chi\vee\Box_a\psi)&\leftrightarrow(\Box_a\chi\vee\Box_a\psi)\nonumber
\end{align}
As one can see, all propositional connectives in~\eqref{equ:irredundant2} are \emph{order-based} (i.e., $\wedge$, $\vee$, $\rightarrow$, or $\coimplies$). From~\cite{CaicedoMetcalfeRodriguezRogger2017}, we know that the single-agent fragment of our semantics is equivalent to the standard semantics. Thus, it suffices to verify the equivalences above in the standard semantics of $\crispsfiveinvG$.

The validity of equivalences in~\eqref{equ:irredundant2} without~$\coimplies$ can be easily established by a~routine check. Let us now consider $\phi=\underbrace{\Box_a(\lozenge_a\psi\coimplies\chi)}_{\phi_L}\leftrightarrow\underbrace{(\lozenge_a\psi\coimplies\lozenge_a\chi)}_{\phi_R}$ as an example. We note briefly that $v(\phi_L,w)\in\{0,v(\Box_a\lozenge_a\psi,w)\}$. Indeed, $v(\lozenge_a\psi\coimplies\chi,w')\in\{0,v(\lozenge_a\psi,w')\}$ for every $R_a$-accessible $w'$, whence, the infimum of such values is either $0$, or is equal to the infimum of the value of $\lozenge_a\psi$, i.e., to the value of $\Box_a\lozenge_a\psi$ at~$w$.

Similarly, $v(\phi_R,w)\in\{0,v(\lozenge_a\psi,w)\}$. Note also that $\Box_a\lozenge_a\psi\leftrightarrow\lozenge_a\psi$ is $\crispsfiveinvG$-valid. Thus, it remains to see that the following two statements hold:
\begin{enumerate}
\item if $v(\Box_a(\lozenge_a\psi\coimplies\chi),w)=0$, then $v(\lozenge_a\psi\coimplies\lozenge_a\chi,w)=0$;
\item if $v(\Box_a(\lozenge_a\psi\coimplies\chi),w)=v(\Box_a\lozenge_a\psi,w)$, then $v(\lozenge_a\psi\coimplies\lozenge_a\chi,w)=v(\lozenge_a\psi,w)$.
\end{enumerate}
For Item~1, we have that
\begin{align*}
v(\Box_a(\lozenge_a\psi\!\coimplies\!\chi),w)\!=\!0&\Rightarrow\left[\begin{matrix}\inf\{v(\lozenge_a\psi,w')\mid wR_aw'\}=0\\\text{ or }\\\exists wR_aw':v(\lozenge_a\psi,w')\leq v(\chi,w')\end{matrix}\right]\\
&\Rightarrow\left[\begin{matrix}\inf\{v(\psi,w')\mid wR_aw'\}=0\\\text{ or }\\\exists wR_aw'\!:\sup\{v(\psi,w'')\!\mid\! w''\!\in\!R_a(w)\}\!\leq\!v(\chi,w')\end{matrix}\right]\\
&\Rightarrow\left[\begin{matrix}v(\lozenge_a\psi,w)=0\\\text{ or }\\\sup\limits_{w''\in R_a(w)}v(\psi,w'')\leq\sup\limits_{wR_aw'}v(\chi,w')\end{matrix}\right]\\
&\Rightarrow v(\lozenge_a\psi\coimplies\lozenge_a\chi,w)=0
\end{align*}

For Item~2, we assume w.l.o.g.\ that $v(\Box_a\lozenge_a\psi,w)=v(\lozenge_a\psi,w)>0$ and proceed as follows:
\begin{align*}
v(\Box_a(\lozenge_a\psi\coimplies\chi),w)\!=\!v(\lozenge_a\psi,w)&\Rightarrow\inf\limits_{wR_aw'}v(\lozenge_a\psi\coimplies\chi,w)=v(\lozenge_a\psi,w)\\
&\Rightarrow\sup\limits_{wR_aw'}v(\psi,w')>\sup\limits_{w''\in R_a(w)}v(\chi,w'')\\
&\Rightarrow v(\lozenge_a\psi,w)>v(\lozenge_a\chi,w)\\
&\Rightarrow v(\lozenge_a\psi\coimplies\lozenge_a\chi,w)=v(\lozenge_a\psi,w)
\end{align*}
The remaining cases can be dealt with in a~similar manner.
\end{proof}

We recall briefly that a~\emph{pointed model} is a~pair $\langle\Mfrak,w\rangle$ with $\Mfrak$ being a~model ($\crispsfiveinvG$-model or $\eF$-model, in our case) and $w$ a~state in~$\Mfrak$. We can now use Lemmas~\ref{lemma:preservation} and~\ref{lemma:irredundantreduction} to show that pointed cluster-tree $\eF$-models $\langle\Mfrak, w\rangle$ can be transformed into pointed $\crispsfiveinvG$-models $\langle\widehat{\Mfrak},\widehat{w}\rangle$ s.t.\ $v(\phi,w)=\widehat{v}(\phi,\widehat{w})$ provided that $\Amc(\phi)$ is bounded by~$\Hmc(\Mfrak)$. The next statement can be obtained by combining Lemmas~2 and~4 from~\cite{CaicedoMetcalfeRodriguezRogger2013}.
\begin{lemma}\label{lemma:eFmodeltomodel}
For every pointed cluster tree $\eF$-model~$\langle\Mfrak,w_0\rangle$ with $w_0$ belonging to the root cluster $\cluster_a$ of~$\Mfrak$ and every \mbox{$\phi\!\in\!\modalLinv$} s.t.\ $\Amc(\phi)\!\leq\!\Hmc(\Mfrak)$, there is a~pointed $\crispsfiveinvG$-model $\langle\widehat{\Mfrak},\widehat{w}_0\rangle$ s.t.\ \mbox{$v(\phi,w_0)=\widehat{v}(\phi,\widehat{w}_0)$.}
\end{lemma}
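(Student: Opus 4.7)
The plan is to follow the classical ``squeezing'' idea from~\cite{CaicedoMetcalfeRodriguezRogger2013} applied cluster by cluster down the tree, combined with the reduction to irredundantly nested formulas supplied by Lemma~\ref{lemma:irredundantreduction}. First, I would replace $\phi$ by an equivalent irredundantly nested $\phi^\flat$; this is harmless because validity/invalidation under either semantics is preserved by Lemma~\ref{lemma:irredundantreduction}, and after this reduction, $\Amc(\phi^\flat)$ coincides with the usual modal depth $\Dmc(\phi^\flat)$, which aligns cleanly with the height of the cluster tree witnessing it. Next, I would proceed by induction on $\Hmc(\Mfrak)$: the base case ($\Hmc(\Mfrak)=1$) already requires the non-trivial squeezing construction, since even in a single cluster the $\eF$-value of $\Box_a\psi$ at $w_0$ only approximates, from below, the true infimum of values of $\psi$.

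For the inductive step, fix the root cluster $\cluster_a$ containing $w_0$ and let $T_a(w_0)=\{0=x_0<x_1<\cdots<x_n=1\}$. For every consecutive gap $(x_i,x_{i+1})$ and each $k\in\Nmbb$, I would fix an order embedding $h^{a,i}_k:[0,1]\rightarrow[0,1]$ that fixes $T_a(w_0)$ pointwise, satisfies $h^{a,i}_k(0)=0$, $h^{a,i}_k(1)=1$, $h^{a,i}_k(1-x)=1-h^{a,i}_k(x)$, and pushes values in the open interval $(x_i,x_{i+1})$ arbitrarily close to $x_i$ as $k\rightarrow\infty$ (dually, to $x_{i+1}$ for the ``sup side'' needed by $\lozenge_a$). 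For each state $u\in\cluster_a\setminus\{w_0\}$, I would then take countably many disjoint copies of the subtree rooted at $u$, where the $k$-th copy is obtained by applying the order-automorphism $h_k$ induced by the $h^{a,i}_k$'s to all $\Fmsf$-valuation values in the subtree; by Lemma~\ref{lemma:preservation}, this rescaling preserves the value of every formula at every state up to $h_k$. Recursively, each such subtree is first replaced by its $\crispsfiveinvG$-counterpart supplied by the inductive hypothesis, since its height is strictly smaller and the modal alternation depth of the relevant $\phi^\flat$-subformulas drops accordingly. Finally, I forget the families $T_a$ and use the standard Kripke $\crispsfiveinvG$-semantics on the resulting infinite model.

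The verification step proves by induction on subformulas $\psi$ of $\phi^\flat$ with $\Amc(\psi)\leq\Hmc(\Mfrak_\psi)$ (where $\Mfrak_\psi$ is the subtree where $\psi$ is evaluated) that the standard value of $\psi$ at the ``base'' copy of its evaluation state equals the original $\eF$-value. The propositional and $\invol$-cases are routine; the modal case is the point: for $\Box_a\psi$ at $w_0$, the infimum over all accessible states in the standard model ranges over all squeezed copies, so if the original $\eF$-infimum of $v(\psi,\cdot)$ falls in $[x_i,x_{i+1})$, the squeezing construction forces the new infimum to be exactly $x_i=v(\Box_a\psi,w_0)$ in the $\eF$-model; the analogous statement for $\lozenge_a\psi$ follows by the $\invol$-symmetry of the $h^{a,i}_k$. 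The main obstacle, and the reason for the irredundant-nesting reduction and the hypothesis $\Amc(\phi)\leq\Hmc(\Mfrak)$, is coordinating the squeezings across agents and nesting levels: without irredundant nesting, a block like $\Box_a\Box_a\psi$ would need to be ``absorbed'' inside one cluster where two distinct squeezings cannot be simultaneously imposed, and without the alternation-depth bound, there would simply not be enough levels of clusters to accommodate the modal alternations of $\phi$.
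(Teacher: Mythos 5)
Your proposal follows essentially the same route as the paper: reduce to irredundantly nested formulas via Lemma~\ref{lemma:irredundantreduction}, then argue by induction on the height of the cluster tree (and on the formula), using $\invol$-symmetric order embeddings that fix $T_a$ pointwise and squeeze each gap $(x_i,x_{i+1})$ towards its lower or upper endpoint, taking $\omega$ many rescaled copies so that the standard infimum (resp.\ supremum) lands exactly on the $\eF$-value of $\Box_a\psi$ (resp.\ $\lozenge_a\psi$). Your variant of the inductive step---first rescaling the $\eF$-subtrees by Lemma~\ref{lemma:preservation} and only then invoking the inductive hypothesis on the strictly lower subtrees---is, if anything, slightly more explicit than the paper's, which directly asks the hypothesis for pointed $\crispsfiveinvG$-models realising the already-squeezed values. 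The case split you would still need to spell out (and which the paper does) is the mixed case $\phi=\chi\circ\psi$ with $\chi$ containing $\Box_a$ and $\psi$ containing $\Box_b$ inside a single cluster, where the $R_a$-copies and the $R_b$-copies of $w_0$ have to be glued into one model without interfering with each other.

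One small but genuine slip: you take squeezed copies only of the subtrees rooted at $u\in\cluster_a\setminus\{w_0\}$. Since $R_a$ is reflexive, $v(\chi,w_0)$ itself contributes to $\inf\{v(\chi,w')\mid w_0R_aw'\}$; if that infimum is attained (or approached) only through $w_0$ and lies strictly inside a gap, your construction leaves the standard infimum strictly above the $\eF$-value. Concretely, take the singleton root cluster $\cluster_a=\{w_0\}$ with $v(\chi,w_0)=\sfrac{3}{4}$ and $T_a(w_0)=\{0,\sfrac{1}{2},1\}$: then $v(\Box_a\chi,w_0)=\sfrac{1}{2}$ in the $\eF$-model, but with no copies of $w_0$ the standard value would be $\sfrac{3}{4}$. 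The fix is exactly what the paper does: copy \emph{all} states of the cluster, including $w_0$, and keep the $k=0$ (identity-embedding) copy of $w_0$ as the distinguished point $\widehat{w}_0$.
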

\begin{proof}
Note that we can w.l.o.g.\ assume that $\Mfrak=\langle W,\langle R_a\rangle_{a\in\Amsf},\langle T_a\rangle_{a\in\Amsf},v\rangle$ is finite, that $\cluster_a=\{w_0,\ldots,w_m\}$, and that $\phi$ is irredundantly nested. For simplicity, we will assume here that $\Amsf=\{a,b\}$. The proofs for larger sets of agents can be obtained in a~similar fashion.

We proceed by double induction on $\Hmc(\Mfrak)$ and on $\phi\in\modalLinv$. First, consider the basis case of $\Hmc(\Mfrak)=1$. That is, $\Mfrak$ contains one proper cluster, namely,~$\cluster_a$. Moreover, $\Amc(\phi)\leq1$. If $\Amc(\phi)=0$, then $\phi$ is $\Box$-free (propositional), and the statement follows immediately since the semantical conditions for propositional formulas in $\eF$-models and $\crispsfiveinvG$-models coincide. Now let $\Amc(\phi)=1$.

Note that since $T_a(w)=T_a(w')$ for every $w$ and $w'$ s.t.\ $wR_aw$, we can assume that $T_a$'s and $T_b$'s are associated with clusters and not states. We let $T_a(\cluster_a)=\{\alpha_1,\ldots,\alpha_n\}$ and $T_b(w_0)=\{\beta_1,\ldots,\beta_r\}$\footnote{Recall that $\Mfrak$ contains only one proper cluster.} and construct a~pointed $\crispsfiveinvG$-model $\langle\widehat{\Mfrak},\widehat{w}_0\rangle$ s.t.\ $\widehat{v}(\phi,\widehat{w}_0)=v(\phi,w_0)$. It suffices to consider the following cases:
\begin{enumerate}[noitemsep,topsep=1pt]
\item $\phi=\Box_a\chi$ for some $\chi\in\modalLinv$ s.t.\ $\chi$ is propositional;
\item $\phi=\Box_b\chi$ for some $\chi\in\modalLinv$ s.t.\ $\chi$ is propositional;
\item $\phi=\chi\circ\psi$ s.t.\ w.l.o.g.\ $\chi$ contains $\Box_a$ and $\psi$ contains $\Box_b$ with $\circ\in\{\wedge,\rightarrow\}$.
\end{enumerate}

For Item~1, note that $v(\Box_a\chi,w_0)\!\leq\!\inf\{v(\chi,w')\!\mid w_0R_aw'\}$. Moreover, as $\Mfrak$~is finite, $v(\Box_a\chi,w_0)=\inf\{v(\chi,w')\mid w_0R_aw'\}$ iff there are some $\alpha_i\in T(\cluster_a)$ and $w''\in\cluster_a$ such that $\inf\{v(\chi,w')\mid w_0R_aw'\}=\alpha_i=v(\chi,w'')$. In this case, we can set $\widehat{\Mfrak}=\langle W,\langle R_a\rangle_{a\in\Amsf},v\rangle$ (i.e., just remove $T_a$'s from~$\Mfrak$) because the $\Fmsf$-models and $\crispsfiveinvG$-models evaluate propositional formulas in the same way. This also deals with the case of $v(\Box\psi,w_0)=1$.

Now let $v(\Box\psi,w_{0})=\alpha_i\neq 1$, and, furthermore, $\alpha_i<\inf\{v(\chi,w')\mid w_0R_aw'\}$. We define order embeddings $h^a_k$'s for every $k<\omega$. We let $h^a_0(x)=x$ for every $x\in[0,1]$ and for the remaining ones, we demand that $h^a_{k}(0)=0$, $h^a_{k}(1)=1$, and $h^a_k(1-j)=1-h^a_k(j)$ for all $j\in[0,1]$.
\begin{align*}
h^a_{k}(\alpha_{i})&=\alpha_{i} \mbox{ for all }i\leq n\mbox{ and }k\in\Nmbb\\
h^a_{k}[(\alpha_{i},\alpha_{i+1})]&=(\alpha_{i},\min(\alpha_{i}+\sfrac{1}{k},\alpha_{i+1}))\mbox{ for all }i\leq n-1\mbox{ and even }k\in\Nmbb\\
h^a_{k}[(\alpha_{i},\alpha_{i+1})]&=(\max(\alpha_{i}-\sfrac{1}{k},\alpha_{i+1}),\alpha_{i+1})\mbox{ for all }i\leq n-1\mbox{ and odd }k\in\Nmbb
\end{align*}
We then copy $\cluster_a$ $\omega$ times. I.e., $\widehat{\Mfrak}=\langle\widehat{W},\widehat{R}_a,\widehat{R}_b,\widehat{v}\rangle$ is as follows:
\begin{align*}
\widehat{W}&=\bigcup\limits_{k<\omega}\{w^k_i\mid i\leq m\}\\
\widehat{R}_a&=\widehat{W}\times\widehat{W}\\
\widehat{R}_b&=\{\langle\widehat{w},\widehat{w}\rangle\mid\widehat{w}\in\widehat{W}\}\\
\widehat{v}(p,w^k_i)&=h^a_k(v(p,w_i))
\end{align*}
% Applying the induction hypothesis and replicating the proof of~\cite[Lemma~2]{BilkovaFergusonKozhemiachenko2025TARK}, it is now easy to see that $v(\phi,w_0)=\widehat{v}(\phi,w^0_0)$. Indeed, the basis case of $\phi=p$ holds by the construction of~$\widehat{\Mfrak}$ and the cases of propositional connectives hold by a~straightforward application of the induction hypothesis. Now, suppose that $\phi=\Box_a\chi$.

We have $v(\psi,w_j)\geq\alpha_i$ for all $w_j\in\cluster_a$, and by the construction of the order embeddings, we get that for all $w'\in\widehat{W}$, $\hat{v}(\psi,w')\geq\alpha_i$. Now, there must be some $w''\in\cluster_a$ for which $v(\psi,w'')\in (\alpha_i,\alpha_{i+1})$. By construction of the order embeddings, for any $\varepsilon>0$, there is some $k\in\Nmbb$ for which $h_k(v(\psi,w''))=\hat{v}(\psi,\hat{w}''^k)\in(\alpha_i,\alpha_i+\varepsilon)$. Hence, $\hat{v}(\Box\psi,\hat{w}^0_0)=\alpha_i=v(\Box\psi,w_0)$.

We note briefly that if $\Amc(\phi)=1$ and $\phi$ contains several subformulas $\Box_a\chi_1$, \ldots, $\Box_a\chi_q$ (but does not contain $\Box_b$'s), the proof proceeds in the same way. The only difference is that we will need to copy $\cluster_a$ if the value of \emph{at least one} $\Box_a\chi_i$ is lower than the infimum of values of~$\chi_i$ in~$\cluster_a$. Still, one can see from the construction that $h^a_k$'s preserve the value of \emph{all} formulas of the form $\Box_a\chi_i$ in $w_0$, whence, it suffices to produce $\omega$ copies of~$\cluster_a$ only once.

The case of Item~2 is similar but we need different model and embeddings. Namely, we set $h^b_0(x)=x$, and for $k>0$, $h^b_k$'s are as follows:
\begin{align*}
h^b_{k}(\beta_{i})&=\beta_{i}\mbox{ for all }i\leq r\mbox{ and }k\in\Nmbb\\
h^b_{k}[(\beta_{i},\beta_{i+1})]&=(\beta_{i},\min(\beta_{i}+\sfrac{1}{k},\beta_{i+1})) \mbox{ for all }i\leq r-1\mbox{ and even }k\in\Nmbb\\
h^b_{k}[(\beta_{i},\beta_{i+1})]&=(\max(\beta_{i}-\sfrac{1}{k},\beta_{i+1}),\beta_{i+1}) \mbox{ for all }i\leq r-1\mbox{ and odd }k\in\Nmbb
\end{align*}
We also define $\widehat{\Mfrak}=\langle\widehat{W},\widehat{R}_a,\widehat{R}_b,\widehat{v}\rangle$ as follows:
\begin{align*}
\widehat{W}&=\{\widehat{w}_i\mid i\leq m\}\cup\{\widehat{w}^k_0\mid 1\leq k<\omega\}\\
\widehat{R}_a&=\{\widehat{w}_i\mid i\leq m\}\times\{\widehat{w}_i\mid i\leq m\}\cup\{\langle\widehat{w}^k_0,\widehat{w}^k_0\rangle\mid 1\leq k<\omega\}\\
\widehat{R}_b&=\{\langle s,s'\rangle\mid\{s,s'\}\subseteq\{\widehat{w}_0\}\cup\{\widehat{w}^k_0\mid 1\leq k<\omega\}\}\\
\widehat{v}(p,\widehat{w}_i)&=v(p,w_i)\\
\widehat{v}(p,\widehat{w}^k_0)&=h^b_k(v(p,w_0))
\end{align*}
Here, $h^b_k$'s are essentially the same as $h^a_k$'s but defined w.r.t.\ $T_b(w_0)$, not $T_a(\cluster_a)$. Namely, 
Note, moreover, that since $\Hmc(\Mfrak)=1$, $R_b(w)=\{w\}$ for all $w\in\cluster_a$. The rest of the proof that $v(\phi,w_0)=\widehat{v}(\phi,\widehat{w}_0)$ is the same as in the previous case.

Finally, in the third case, we combine the two constructions by defining $\widehat{\Mfrak}\!=\!\langle\widehat{W},\widehat{R}_a,\widehat{R}_b,\widehat{v}\rangle$ is as follows:
\begin{align*}
\widehat{W}&=\underbrace{\bigcup\limits_{k<\omega}\{w^k_i\mid i\leq m\}}_{W_{R_a}}\cup\underbrace{\{\widehat{w}^k_0\mid 1\leq k<\omega\}}_{W_{R_b}}\\
\widehat{R}_a&=\bigcup\limits_{k<\omega}\{w^k_i\mid i\leq m\}\times\bigcup\limits_{k<\omega}\{w^k_i\mid i\leq m\}\cup\{\langle\widehat{w}^k_0,\widehat{w}^k_0\rangle\mid 1\leq k<\omega\}\\
\widehat{R}_b&=\{\langle s,s'\rangle\mid\{s,s'\}\subseteq\{w^0_0\}\cup\{\widehat{w}^k_0\mid 1\!<\!k\!<\!\omega\}\}\cup\!\!\bigcup\limits_{1\leq k<\omega}\!\!\!\!\{\langle w^k_i,w^k_i\rangle\mid i\!\leq\!m\}\\
\widehat{v}(p,w^k_i)&=h^a_k(v(p,w_i))\\
\widehat{v}(p,\widehat{w}^k_0)&=h^b_k(v(p,w_0))
\end{align*}
Again, the proof that $v(\phi,w_0)=\widehat{v}(\phi,w^0_0)$ is essentially the same as that in the previous cases but now we also use the fact that the value of $\chi$ depends only on~$W_{R_a}$ and the value of $\psi$ only on~$W_{R_b}$.\footnote{Note that even for formulas such as $\phi=\Box_a\chi_1\rightarrow(\Box_b\psi_1\rightarrow((\Box_a\chi_2\wedge\tau_1)\rightarrow(\tau_2\wedge\Box_b\psi_2)))$ with $\Amc(\phi)=1$, the values of all its $\Box_a$-subformulas still depend only on~$W_{R_a}$, the values of $\Box_b$-subformulas only on~$W_{R_b}$, and the values of propositional subformulas only on $w^0_0$. As the values of modal subformulas are preserved using $h^a_k$'s and $h^b_k$'s, it suffices to consider the case when $\phi=\chi\circ\psi$ with $\chi$ and $\psi$ containing $\Box_a$ and $\Box_b$, respectively.}

Let us now proceed to the induction step. Assume that $\Hmc(\Mfrak)=n+1$ and the statement holds for all cluster-tree $\eF$-models of the height $n$ or lower. Again, we conduct induction on~$\phi$. The basis case of $\phi=p$ holds trivially, the cases of $\phi=\invol\chi$, $\phi=\chi\wedge\psi$, and $\phi=\chi\rightarrow\psi$ can be obtained by straightforward applications of the induction hypothesis. It remains to consider modal cases.

Let $\phi=\Box_a\chi$ and let further, $v(\Box_a\chi,w_0)=x$. We now have two cases: (I)~$\inf\{v(\chi,w')\mid wR_aw'\}=x$ or (II) $\inf\{v(\chi,w')\mid wR_aw'\}>x$. In the case~(I), we replace every $w_i\in\cluster_a$ with a~pointed $\crispsfiveinvG$-model $\langle\Mfrak_{w_i},\widehat{w}^i_0\rangle$ s.t.\ $v^{\Mfrak_{w_i}}(\chi,\widehat{w}^i_0)=v(\chi,w_i)$. Note that since $\Box_a\chi$ is irredundantly nested, we have $\Amc(\chi)=\Amc(\Box_a\chi)-1$. Thus, by Corollary~\ref{cor:clustertree}, we can apply the induction hypothesis to obtain the needed $\langle\Mfrak_{w_i},\widehat{w}^i_0\rangle$'s. We now extend accessibility relations on $\widehat{w}^i_0$'s as expected: $\widehat{w}^i_0R_a\widehat{w}^j_0$ and $\widehat{w}^i_0R_b\widehat{w}^j_0$ for every $i,j\in\{0,\ldots,m\}$. Clearly, $v(\Box_a\chi,w_0)=v^{\Mfrak_{w_0}}(\Box_a\chi,\widehat{w}^0_0)$.

In the case~(II), we replace each $w_i\in\cluster_a$ with countably many pointed $\crispsfiveinvG$-models $\langle\Mfrak^k_{w_i},\widehat{w}^{k,i}_0\rangle$ s.t.\ $v^{\Mfrak^k_{w_i}}(\chi,\widehat{w}^{k,i}_0)=h^a_k(v(\chi,w_i))$. Again, this is possible to have by the induction hypothesis because $\Amc(\chi)=\Amc(\Box_a\chi)-1$. One can now show that $v(\Box_a\chi,w_0)=v^{\Mfrak^k_{w_0}}(\Box_a\chi,\widehat{w}^{0,0}_0)$ utilising the argument in the basis case.

Finally, let $\phi=\Box_b\chi$. The procedure is similar to the case of $\phi=\Box_a\chi$ but we will need to replace the states in $R_b(w_0)$ and use $h^b_k$'s. The result follows.
\end{proof}

\semanticsequivalence*
\begin{proof}
The left-to-right direction follows from Corollary~\ref{cor:clustertree} and Lemma~\ref{lemma:eFmodeltomodel}. For right-to-left, suppose that $\phi$ is not $\crispsfiveinvG$-valid. Then there exists a~$\crispsfiveinvG$-countermodel $\Mfrak=\langle W,\langle R_{a}\rangle_{a\in\Amsf},v\rangle$ with $w\in W$ s.t.\ $v(\phi,w)=x$ for some $x<1$.

For every $a\in\Amsf$, we set
\begin{align*}
[\phi]_{\Box_a}^{\sim}&=\lbrace \Box_a\psi\mid\Box_a\psi\mbox{ is a subformula of }\phi\rbrace\!\cup\!\lbrace\invol\Box_a\psi\mid\Box_a\psi\mbox{ is a subformula of }\phi\rbrace
\end{align*}
and define a function $\widehat{T}_a$ for each $s\in W$ with the following scheme:
\begin{align*}
\widehat{T}_a(s)&=\lbrace\inf\lbrace v(\psi,w')\mid sR_aw'\rbrace\mid \psi\in[\phi]_{\Box_a}^{\sim}\rbrace\cup\lbrace 0,\tfrac{1}{2},1\rbrace
\end{align*}%
Let $\widehat{\Mfrak}=\langle \widehat{W},\langle \widehat{R}_a\rangle_{a\in\Amsf},\langle \widehat{T}_a\rangle_{a\in\Amsf},\hat{v}\rangle$ where $\widehat{W}=W$, $\widehat{R_a}=R_a$ for all $a\in\Amsf$, and $\hat{v}=v$. By definition, $\langle\widehat{W},\langle\widehat{R}_a\rangle_{a\in\Amsf}\rangle$ is a frame and each $\widehat{T}_a$ and point $s$, $\widehat{T}_a(s)$ is finite and closed under $1-x$, whence $\widehat{\Mfrak}$ is an $\Fmsf$-model. %
We claim that for all $w$ and all $\psi$ a subformula of $\phi$, $\hat{v}(\psi,w)=v(\psi,w)$.

The basis step in case $\phi$ is atomic is immediate and the propositional connectives follow easily, leaving the case of an operator $\Box_a$. Let $\psi=\Box_a\xi$. Then $v(\Box_a\xi,w)=\inf\lbrace v(\xi,w')\mid wR_aw'\rbrace$, whence by construction, $v(\Box_a\xi,w)$ is an element $x$ in $\widehat{T}_a(w)$ such that for all $w'$ such that $w\widehat{R}_aw'$, $v(\xi,w')\leq x$; as it is the greatest such element, $\hat{v}(\Box_a\xi,w)=x$, whence $\hat{v}(\Box_a\xi,w)=v(\Box_a\xi,w)$. The case of $\psi=\Box_b\xi$ can be considered in a~similar manner. This completes the induction, so $\widehat{\Mfrak}$ provides an $\Fmsf$-countermodel.
\end{proof}
\end{document}